\documentclass[12pt,times]{article}  
\usepackage{amsmath, bm}
\usepackage{amssymb}
\usepackage{placeins}
\usepackage{textcomp}
\usepackage{multirow}
\usepackage{amsthm}
\usepackage{color}
\usepackage{tipa}
\usepackage{graphicx}
\usepackage{psfrag}
\usepackage{amsfonts}
\usepackage{mathrsfs}
\usepackage{epstopdf}
\usepackage{epsfig}
\usepackage{pbox}
\usepackage{tabularx, caption}

\usepackage{amsmath}
\usepackage{algorithm}
\usepackage[noend]{algpseudocode}
\usepackage{algpseudocode}
\usepackage{pifont}
\usepackage{comment}

\usepackage{lineno}

\makeatletter
\def\BState{\State\hskip-\ALG@thistlm}
\makeatother

\def\mysection{\setcounter{equation}{0}\section}

\newcommand{\proofend}{\mbox{ }\hfill \raisebox{.4ex}{\framebox[1ex]{}}}

\newtheorem{Theorem}{Theorem}[section]
\newtheorem{Lemma}[Theorem]{Lemma}

\newtheorem{Remark}[Theorem]{Remark}

\newcommand {\p}{\partial}
\numberwithin{equation}{section}
\begin{document}
\title{\Large A finite element discretization with semi-implicit nonlinear multistep scheme for a two-dimensional competition-diffusion system of three competing species with different mobility rates}
\author{\small Xianping Li\footnotemark \ \  Woinshet D. Mergia\footnotemark \ \ and \ \ Kailash C. Patidar\footnotemark} 
\date{}
\maketitle
\def\thefootnote{\fnsymbol{footnote}}
\setcounter{footnote}{0}
\footnotetext[1]{College of Integrative Sciences and Arts, Arizona State University, Mesa, AZ, USA. E-mail: xianping@asu.edu}
\footnotetext[2]{Department of Mathematics, Jimma University, Jimma, Ethiopia. E-mail: woinshetdefar20@gmail.com}
\footnotetext[3]{Department of Mathematics and Applied Mathematics, University of the Western Cape, Private Bag X17, Bellville 7535, South Africa. E-mail: kpatidar@uwc.ac.za.}
\noindent
\begin{abstract}
\noindent
In ecological studies of pattern formation, models of the competitive-diffusion type are generally singularly perturbed, and the numerical approximation of such models is challenging. 
In this paper, we present finite element discretization combined with a second-order semi-implicit nonlinear multistep scheme for a two-dimensional three-species competition-diffusion system with distinct mobility rates. 
The method employs a $C^0$-conforming Galerkin finite element approximation in space and a Crank-Nicolson/Adams-Bashforth-type time integration that treats the diffusion terms implicitly while linearizing the nonlinear reaction terms in a stage-by-stage manner. 
The resulting scheme is linear at each time step and avoids iterative nonlinear solvers. Rigorous stability analysis shows that the discrete method inherits the asymptotic stability properties of the continuous model without restrictions on the time step size. 
Numerical simulations for various mobility regimes demonstrate the ability of the proposed method to capture complex spatio-temporal patterns, including droplet-like, banded, spiral, and glider-type structures.
\end{abstract}

\noindent
2010 Mathematics Subject Classification: 35K57, 35B36, 35Q92, 65M06, 65M60. \\
\\
\noindent
Keywords: Competitive-diffusion; Spatial coexistence; Spatio-temporal patterns; Semi-implicit nonlinear multistep scheme; Finite element discretization.

\mysection{Introduction}\label{intro}
The study of spatio-temporal pattern formation in ecologically interacting species has been an intense research area in the last few decades. Some seminal works in this field are accounted for famous ecologists and mathematicians such as Fisher \cite{fisher1937wave}, 
Kolomogrov et al. \cite{kolmogorov1937study} and Volterra \cite{Voltera1926}. It is still continuing attracting enormous research interest, see, e.g.,  \cite{murray1989, okubo1980, Pereira-2018, shigesada1997biological, Tandon-2016}.

Mathematical modelling and modern computing facilities play an important role in the field of mathematical ecology. One way to mathematically describe the interaction of two or more species, such as competing for resources such as food and territory, 
is by using competitive-diffusion models. Such models have been intensively investigated by several researchers. For example, in \cite{contento2015, ei1999segregating, mimura2015dynamic}, a three-species Lotka-Volterra competition-diffusion model with 
the involved species having equal and very small mobility rates compared to the magnitude of the growth terms was used to demonstrate some new types of spatio-temporal patterns that have important applications in biodiversity. 
However, in nature, it is evident that the majority of such interactions of competing species disperse at different rates. Furthermore, previous studies have shown that differences in the dispersal distribution of competing species can fundamentally alter the outcome of competition \cite{petrovskii2001}.
In this study, we consider the numerical simulation of the Lotka-Volterra cyclic competition-diffusion model with different mobility rates. The governing model is described by a system of time-dependent partial differential equations of the reaction-diffusion type (\cite{petrovskii2001}). 
The model problem exhibits a new dynamical behavior with cyclic competition and is different from the existing patterns formed from reaction-diffusion systems used previously \cite{cangiani2017revealing, petrovskii2001}.

For problems that can be additively split into linear and nonlinear parts, implicit-explicit (IMEX) methods, in which the linear part is treated implicitly while the nonlinear part is treated explicitly, have been among the most popular choices in the research community. 
Various IMEX schemes have been developed to address the reaction-diffusion problem in different fields of study. Boscarino {et \,al.} \cite{boscarino2016high} developed higher-order semi-implicit IMEX schemes based on the Runge-Kutta method for a general class of problems 
with stiff and non-stiff additive parts, including reaction-diffusion problems. Cai and Cen \cite{cai2007numerical} proposed a slightly different second-order convergent scheme based on stretched variables for the singularly perturbed two species predator-prey model in two space dimensions. 
In \cite{cangiani2017revealing}, an IMEX--type scheme with automatic spatial adaptive capability was developed to study various types of interactive patterns in cyclic competition-diffusion systems of three species with different mobility rates. 
They applied the explicit Adams-Bashforth method for the nonlinear reaction term and the implicit Crank-Nicolson method for the diffusion term.

The numerical method presented in this paper considers a $C^0$-conforming finite element method for discretization in space,
which leads to a large nonlinear system of ODEs. It was then temporally discretized using a high-order stable finite difference method. Owing to the relative dominance of the reaction rates over the diffusion rates, the commonly used temporal integration IMEX methods lead to schemes with very weakened stability behavior. 
This is because of the fact that, in such IMEX methods, the diffusion (the less dominant part) is treated implicitly, whereas the reaction terms (the dominant part) is treated explicitly, thus leading to a higher tendency of the global scheme to behave as an explicit scheme. 
To overcome the restrictive nature of IMEX methods, in addition to treating the diffusion term implicitly, we use a semi-implicit method based on the Crank-Nicolson and Adams-Bashforth methods for the reaction term, whereas the diffusion terms are treated implicitly using the Crank-Nicolson method. 
The resulting scheme is multistep and nonlinear with respect to the previous time-steps. This leads to one of the desirable properties in time-stepping algorithms, referred to as {\em asymptotic consistency}, in which the discrete problem replicates the asymptotic stability behavior of the continuous problem. 
Note that the scheme is implicitly linear with respect to the current unknown step; hence, no iterative nonlinear solver is required. In addition, we demonstrate the existence of several new complex regular spatio-temporal patterns in two-dimensions, which may have important biological applications and are not available in the literature. 
Note that the cyclic competition of the three species without diffusion leads to the extinction of one or more species, as shown in \cite{gilpin1975limit, may1975}. However, the same model of cyclic competition with a diffusion term allows for the existence of all three species \cite{durrett1998spatial, petrovskii2001, reichenbach2007mobility}.

The remainder of this paper is organized as follows. In Section \ref{Three species-PP-6}, we provide an overview of the governing mathematical model and its stability analysis. In Section \ref{numerical method-PP-6}, the proposed numerical methods are formulated, and their asymptotic consistency is proved. 
Extensive numerical simulations demonstrating the performance of the numerical scheme are presented in Section \ref{numerical results-PP-6}. Finally, concluding remarks are presented in Section \ref{Concluding remarks-PP-6}.

\section{Three species competitive-diffusion model}\label{Three species-PP-6}
Let $\Omega$ be an open and bounded subset of $\mathbb{R}^2$, with piecewise smooth boundary $\partial \Omega$, representing the habitat of an ecosystem in which three competing species, whose population densities are denoted by $u_i$ ($i=1,2,3$), live in and interact with each other. 
Consider a three-species Lotka-Volterra competition-diffusion model describing the spatial interaction of the species \cite{may1975, petrovskii2001}:

\begin{eqnarray}\label{model-problem}
\dot{u}_i = \nabla\cdot(D_i\nabla u_i) + u_if_i(u_1, u_2, u_3);
~~~~~i=1,2,3,
\end{eqnarray}
where the superposed dot denotes time derivative and $\nabla$ is the gradient operator; $D_i$ ($i = 1,2,3$), which may possibly depend on position and time, is the (mobility) diffusivity of species $i$. The linear factor of the growth term is given by:
\begin{equation}\label{eq:growth-term}
f_i(u_1, u_2, u_3) =r_i-\sum_{j=1}^{3}a_{ij}u_j,~~~(i=1,2,3),
\end{equation}
where the parameters $r_i$ ($i=1,2,3$) is the intrinsic growth rate, and $a_{ij}$ is the inter-specific (when $i\neq j$) or intra-specific competition (self-limitation) rate. It is assumed that these parameters are positive. 
Systems \eqref{model-problem} and \eqref{eq:growth-term} are supplemented with homogeneous Neumann boundary conditions
\begin{equation}\label{eq:neumann-boundary}
\dfrac{\partial u_i}{\partial \bm{n}} = 0,~~~\text{for}~~\partial\Omega\times\mathbb{R}^+, ~~~~~i=1,2,3,
\end{equation}
and initial conditions
\begin{equation}\label{eq:initial}
u_1(\bm{x},0) = u_0(\bm{x}),~~~u_2(\bm{x},0) = v_0(\bm{x}),~~~ \text{and}~~u_3(\bm{x},0) = w_0(\bm{x}).
\end{equation}
Here, $\bm{x}$ represents the coordinate of a point in $\Omega$, $\bm{n}$ is a unit vector normal to the boundary $\partial \Omega$, and $u_0$, $v_0$, $w_0$ are some prescribed positive functions defined over the spatial domain $\Omega$.

Eq. \eqref{model-problem} has 15 parameters that could complicate the analysis, hence, it is instructive to scale the densities of each species according to their carrying capacities and then rescale the time and space coordinates. This gives
\begin{eqnarray}\label{rescale}
 \left.
 \begin{aligned}
a_{11} &= a_{22} = a_{33} = 1,~~ \varepsilon_1 = 1, \\
\varepsilon_{2} &= \frac{D_{2}}{D_{1}},~~\varepsilon_{3} = \frac{D_{3}}{D_{1}},~~ \tilde{x} = \sqrt{\frac{1}{D_{1}}}x,\\
a_{12} &= a_{23} = a_{31} = a,  ~~ a_{13} = a_{21} = a_{32} = b.
\end{aligned}
\right\}
\end{eqnarray}
Doing so, we obtain the following simplified competition-diffusion system
\begin{eqnarray}\label{scaled-model}
 \left.
 \begin{aligned}
\dot{u}_1& = ~\,~\Delta u_1 + u_1(1-u_1-a u_2-b u_3),\\
\dot{u}_2& = \varepsilon_2\Delta u_2 + u_2(1-b u_1- u_2-a u_3),\\
\dot{u}_3& = \varepsilon_3\Delta u_3 + u_3(1-\alpha u_1- b u_2- u_3),\\
\end{aligned}
\right.
\end{eqnarray}
where $\alpha=a_{31}$. Unless stated otherwise, $\alpha=a$. To make the dynamics richer we sometimes consider $\alpha \neq a$; hence, the total number of parameters become five. 
We also assume that $0<\varepsilon_{2}\, , \varepsilon_{3} \leq 1$.

Let us first consider the local stability behavior of model problem \eqref{scaled-model} without diffusion. In this case Eq. \eqref{scaled-model} gives

\begin{eqnarray}\label{spatialhomo}
\left.
\begin{aligned}
\dfrac{\mathrm{d}u_1}{\mathrm{d}t~} &= u_1(1-u_1- a u_2-b u_3):=g_1(u_1,u_2,u_3),\\
\dfrac{\mathrm{d}u_2}{\mathrm{d}t~} &= u_2(1-b u_1- u_2-a u_3):=g_2(u_1,u_2,u_3),\\
\dfrac{\mathrm{d}u_3}{\mathrm{d}t~} &= u_3(1-a u_1- b u_2- u_3):=g_3(u_1,u_2,u_3).
\end{aligned}
\right\}
\end{eqnarray}
We now analyze the local stability of the spatially homogeneous system \eqref{spatialhomo} because the dynamics of \eqref{scaled-model} are largely controlled by it. 
In addition, the analysis provides the necessary information on the choice of parameters for simulation. We obtain the equilibrium points of Eq. \eqref{spatialhomo} by equating the right-hand side to zero. 
This results in eight equilibria and the number of equilibrium points in the positive octant can differ for different parameter values. Furthermore, we assume the following
\begin{equation}\label{assume-parameter}
a + b > 2,\,\, a>1>b.
\end{equation}
Under the condition stipulated in Eq. \eqref{assume-parameter}, the system consists of exactly five equilibrium points in the positive quadrants
\[ A(0,0,0), B(0,0,1), C(0,1,0), D(1,0,0) ~~ \text{and}~~ E\left(\frac{1}{1+a+b},\frac{1}{1+a+b},\frac{1}{1+a+b}\right). \]

Following the standard procedure, we analyze the linear stability of these five equilibria by calculating the eigenvalues of the Jacobian matrix of the system in \eqref{spatialhomo} evaluated at the equilibria $(u_1^{*}, u_2^{*}, u_3^{*})$:
\begin{equation}
J = \begin{bmatrix}
 \frac{\partial g_1}{\partial u_1} & & \frac{\partial g_1}{\partial u_2} & & \frac{\partial g_1}{\partial u_3}  \\
 \\
 \frac{\partial g_2}{\partial u_1} & & \frac{\partial g_2}{\partial u_2} & & \frac{\partial g_2}{\partial u_3}  \\
 \\
 \frac{\partial g_{3}}{\partial u_1} & & \frac{\partial g_{3}}{\partial u_2} & & \frac{\partial g_{3}}{\partial u_3}  \\
\end{bmatrix}_{(u_1^{*}, u_2^{*}, u_3^{*})}.
\end{equation}
The trivial equilibrium state $A(0,0,0)$, with eigenvalues $\lambda_{1} = \lambda_{2}=\lambda_{3} = 1 $, is an unstable node describing the total extinction of the three species. 
The one-species equilibrium states $B(0, 0, 1)$, $C(0,1,0)$ and $D(1,0,0)$ have eigenvalues $\lambda_{1} = -1\,,~\lambda_{2} = 1-b$ and $\lambda_{3} = 1-a$; which implies that all of them are saddle points. 
These equilibrium points are biologically important in that they represent coexistence when all of them appear at the same time in different regions of a spatial habitat.

Steady state $E$ is the most interesting biological state because all three population densities coexist at $E$. At $E$ the corresponding Jacobian matrix becomes
\begin{equation}
J(E) = -\dfrac{1}{1+a+b}\begin{bmatrix}
 1 & a & b  \\
 b & 1 & a \\
 a & b & 1 \\
\end{bmatrix},
\end{equation}
and the associated eigenvalues are
\begin{eqnarray}\label{E-eigenvalue}
\begin{aligned}
\lambda_{1} &=-(1 +a +b),\\
\lambda_{2,3} &=-(1 - (a+b)/2) \pm i \left(\dfrac{\sqrt{3}}{2}\right)(a-b).\\
\end{aligned}
\end{eqnarray}
Hence, this equilibrium point is asymptotically stable if $a + b < 2$, neutrally stable if $a + b = 2$ and unstable if $a + b > 2$.

Under restriction \eqref{assume-parameter}, the only coexistence pattern is always a saddle point, and the only attractor in the phase space is a heterocyclic cycle consisting of three one-species equilibrium points.

The dynamical behaviors of complex patterns arising in the competition-diffusion model \eqref{model-problem} have been extensively studied, for example, 
\cite{adamson2012revising, chen2012exact, contento2015, kishimoto1982diffusive,  mimura1986, mimura2015dynamic,  petrovskii2001}. Coexistence in the form of cyclic competition patterns is an interesting aspect of such models for ecological and related applications. 
A cyclic competition pattern corresponds to a scenario, in relation to a family of parameters in which one species dominates the other in a cyclical manner, as depicted in Fig.~\ref{fig:cycle}. 
WHen each of these species has the same mobility rate, spiral waves occur at each triple junction (a point where the three distinct regions, each dominated by a single species, meet). 
However, when species have different mobility rates, the scenario is much more complex. One of the aims of this paper is to illustrate the capabilities of the numerical method, which will be discussed in Section~\ref{numerical method-PP-6}, 
in generating interesting complex spatio-temporal patterns that have been studied in \cite{ adamson2012revising, cangiani2017revealing, petrovskii2001}.
\begin{figure}[!h]
\begin{center}
\begin{tabular}{c}
\includegraphics[angle=0,width=6.5cm,height=5.0cm ]{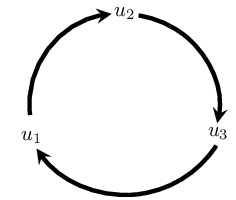}
\end{tabular}
\caption{{\small Schematic representation of cyclic competition. The arrows indicate the direction of domination.}}
\label{fig:cycle}
\end{center}
\end{figure}
Due to the presence of strong nonlinear coupling in the growth terms and the fact that the solutions of such a competitive-diffusion system may involve very fine moving spatial features as internal layers, 
the numerical approximation of such competitive-diffusion systems is very challenging. In what follows, we present a semi-implicit nonlinear multistep method based on a Method-of-Lines framework, 
in which $C^0$-conforming finite element is employed for spatial discretization.

\section{Numerical method}\label{numerical method-PP-6}
In this section, we present a high-order semi-implicit method for the competition-diffusion system of partial differential equations (Eqs. \eqref{model-problem}-\eqref{eq:initial}). 
First, we employ the Galerkin finite element method to discretize the problem in space, resulting in a large system of nonlinear ordinary differential equations. 
We then use a novel high-order semi-implicit scheme based on a multistep Lagrange method to integrate this system in a staggered manner.

\subsection{Spatial discretization: Galerkin finite element method}
The first step in the Galerkin finite element method for partial differential equations is to convert the strong form into a suitable equivalent integral equation known as a weak form. 
To this end, let $v$ be any test function in space $H^1(\Omega)$ (the set of functions on $\Omega$ whose derivatives are square integrable). 
We then multiply \eqref{model-problem} by $v$ and integrate it over the spatial domain $\Omega$. Then, after employing integration by parts together with the boundary conditions \eqref{eq:neumann-boundary}, 
we obtain the weak form that reads as follows: 
find $u_i\in H^1(\Omega)$, $i = 1,2,3$, such that
\begin{equation}\label{eq:weak-form}
\langle\dot{u}_i,~v\rangle= -\langle D_i\nabla u_i,~\nabla v\rangle + \langle u_if_i(u_1,u_2,u_3),~v\rangle,~~~~~\text{for every }~v\in H^1(\Omega),
\end{equation}
where $\langle\cdot,~\cdot\rangle$ denotes the integral-inner product between the two functions. The strong form \eqref{model-problem} and weak form \eqref{eq:weak-form} are equivalent in the sense that any sufficiently smooth solution of one is also a solution to the other.

Let $\bm{\mathcal{T}}=\lbrace\Omega^e\rbrace_{e=1}^{ne}$ be a triangulation of the spatial domain $\Omega$ into total $ne$ non-overlapping quadrilaterals that approximately covers $\Omega$.
Let $\lbrace\varphi_k\rbrace_{k=1}^{nd}$ be a set of basis (shape) functions on mesh $\bm{\mathcal{T}}$, where $nd$ is the total number of nodes in the mesh. Each shape function $\varphi_k$ has a local support over the elements that share the $k$-th node
and is a piecewise polynomial (usually Lagrange polynomials) whose degree is determined by the local number of nodes that a typical element $\Omega^e$ possesses. Moreover, they satisfy the interpolatary condition
\begin{equation}
\varphi_k(\bm{x}_j) = \delta_{kj};~~~~~~~~~~~~~~~~k, j=1, 2, \dots, nd,
\end{equation}
where $\delta_{kj}$ is the Kronecker delta function and $\bm{x}_j$ is the coordinate of the $j$-th node.

Now, we replace the population density function $u_i$ by the finite element interpolation $u_i^h$ which is defined by
\begin{equation}\label{eq:u-ih}
u_i^h(\bm{x}, t) = \sum_{k=1}^{nd}\varphi_k(\bm{x})u_{i[k]}(t),
\end{equation}
where $u_{i[k]}(t)$ is the $k$-th nodal value of $u_i^h$ at time $t$. We also replace test function $v$ with shape function $\varphi_j$. Then the discrete form of \eqref{eq:weak-form} becomes
For $i=1,2,3$:
\begin{equation}\label{eq:semi-discrete}
\langle\dot{u}_i^h,~\varphi_j\rangle = -\langle D_i\nabla u_i^h,~\nabla \varphi_j\rangle + \langle [u_i^hf_i(u_1^h,u_2^h,u_3^h)],~\varphi_j\rangle,~~~\text{for all } j=1,\dots,nd,
\end{equation}

For convenience, we rewrite Eq. \eqref{eq:semi-discrete} in matrix form as 
\begin{equation}\label{eq:semi-discrete-t}
\mathbf{M}\dot{\bm{u}}_i^h(t) = D_i\mathbf{K}\bm{u}_i^h(t) + \bm{b}_i(u_1^h,u_2^h,u_3^h),
\end{equation}
where $\bm{u}_i^h(t)$ is the vector of nodal values $u_i[k]$ at time $t$, and the entries of the mass and stiffness matrices are given by
\begin{equation}
\mathbf{M}_{jk} = \langle\varphi_j,~\varphi_k\rangle, ~~~\text{and}~
\mathbf{K}_{jk} = \langle\nabla\varphi_j,~\nabla\varphi_k\rangle,~~~~~~(j,k=1,\dots,nd).
\end{equation}
The right-hand side vectors $\bm{b}_i(u_1^h,u_2^h,u_3^h) = (b_j)$ $(i=1,2,3)$ are given by 
\begin{equation}\label{eq:b-j}
	b_j = \langle [u_i^h f_i(u_1^h,u_2^h,u_3^h)],~\varphi_j\rangle, ~~~ (j = 1,\dots,nd).
\end{equation}

By lumping (or multiplying the semi-discrete equations in \eqref{eq:semi-discrete} by $\mathbf{M}^{-1}$), and dropping the time dependence, we obtain:
\begin{equation}\label{eq:semi-discrete2}
	\dot{\bm{u}}_i^h = D_i\widehat{\mathbf{K}}\bm{u}_i^h + \mathbf{M}^{-1} \bm{b}_i(u_1^h,u_2^h,u_3^h),~~~\text{where }~\widehat{\mathbf{K}}=\mathbf{M}^{-1}\mathbf{K}, ~ i = 1, 2, 3.
\end{equation}

Next, we employ a multistep explicit-implicit method for the temporal integration. 

\subsection{Temporal integration}\label{sec:temporal}
Let $I=[0,~T]$ be the interval of interest and $\lbrace I_n\rbrace_{n=1}^{N}$ be the uniform partition of non-overlapping subintervals of the form $I_m = [t_{m-1},t_m]$, $m=1,2,3,\dots,N$ with step-size $\Delta t =t_{m}-t_{m-1}$.

We now discuss the temporal discretization of \eqref{eq:semi-discrete2} using a second-order semi-implicit technique involving the Crank-Nicolson and Adams-Bashforth methods. 
From an efficiency requirement point of view, we set up the semi-implicit scheme in the stage-by-stage (staggered) algorithm as follows.

Suppose we have the approximate solutions at $t_n$ and all the previous steps; in particular, we have $\lbrace \bm{u}_{i}^{n-1}\rbrace_{i=1}^{3}$ and $\lbrace \bm{u}_{i}^{n}\rbrace_{i=1}^{3}$. 
Then, the Crank-Nicolson and Adam-Bashforth methods read:
\begin{equation}\label{eq:u-hat}
\left.
\begin{aligned}
	\tilde{\bm{u}}_i^{n+1/2} &= -\frac{1}{2}\bm{u}_i^{n-1}+\frac{3}{2}\bm{u}_i^{n},\\
	\hat{\bm{u}}_i^{n+1/2} &= ~~\frac{1}{2}\bm{u}_i^{n~~}+\frac{1}{2}\bm{u}_i^{n+1},
\end{aligned}
\right\rbrace~~~~~~~(i=1,2,3).
\end{equation}
Note that $\tilde{\bm{u}}_i^{n+1/2}$ is computed explicitly, whereas $\hat{\bm{u}}_i^{n+1/2}$ involves an unknown value $\bm{u}_i^{n+1}$.  
We want to obtain the solution $\lbrace \bm{u}_{i}^{n+1}\rbrace_{i=1}^{3}$ at time $t_{n+1}$. 
To do so, we proceed as follows:
\begin{itemize}\label{eq:stage1}
\item[]{\bf Stage 1 ($i=1$):} For $b_j$ in \eqref{eq:b-j} with $i=1$, we approximate it as
\begin{eqnarray}
		\tilde{b}_j & = & \langle [\hat{{u}}_1^{n+1/2} f_1 \left(\tilde{{u}}_1^{n+1/2}, \tilde{{u}}_2^{n+1/2}, \tilde{{u}}_3^{n+1/2} \right)],~\varphi_j\rangle, ~~~ (j = 1,\dots,nd) \\
					& = & \int_{\Omega} \hat{{u}}_1^{n+1/2} f_1 \left(\tilde{{u}}_1^{n+1/2}, \tilde{{u}}_2^{n+1/2}, \tilde{{u}}_3^{n+1/2} \right) \varphi_j \, d\bm{x} \\
					& = & \int_{\Omega} \sum_{k=1}^{nd} \varphi_k(\bm{x})\hat{{u}}_{1[k]}^{n+1/2} f_1 \left(\tilde{{u}}_1^{n+1/2}, \tilde{{u}}_2^{n+1/2}, \tilde{{u}}_3^{n+1/2} \right) \varphi_j \, d\bm{x} \\
					& = & \sum_{k=1}^{nd} \left( \int_{\Omega} f_1 \left(\tilde{{u}}_1^{n+1/2}, \tilde{{u}}_2^{n+1/2}, \tilde{{u}}_3^{n+1/2} \right) \varphi_k \varphi_j \, d\bm{x} \right) \hat{{u}}_{1[k]}^{n+1/2}.
		\end{eqnarray}
Because $f_1$ is explicitly represented by known functions, we obtain the approximation for vector $\bm{b}_1$ as
\begin{equation}
	\tilde{\bm{b}}_1 = \mathbf{B} \hat{\bm{u}}_1^{n+1/2}, 
\end{equation}
where the entries of $\mathbf{B}$ are given by
\begin{equation}
	\mathbf{B}_{jk} = \int_{\Omega} f_1 \left(\tilde{{u}}_1^{n+1/2}, \tilde{{u}}_2^{n+1/2}, \tilde{{u}}_3^{n+1/2} \right) \varphi_k \varphi_j \, d\bm{x}
					:= \langle \tilde{f_1} \varphi_k, ~\varphi_j \rangle.
\end{equation}

Then, based on \eqref{eq:semi-discrete-t}, with $\bm{u}_1^h(t)$ approximated by $\hat{\bm{u}}_1^{n+1/2}$, we obtain $\bm{u}_1^{n+1}$ by solving
\begin{equation}
	\mathbf{M} \dfrac{\bm{u}_1^{n+1}-\bm{u}_1^{n}}{\Delta t} = D_1 {\mathbf{K}}\hat{\bm{u}}_1^{n+1/2} + \mathbf{B} \hat{\bm{u}}_1^{n+1/2}.
\end{equation}
Applying \eqref{eq:u-hat}, we have
\begin{equation}\label{eq:stage1}
	\left( \mathbf{M}-\frac{1}{2} \Delta t (D_1 \mathbf{K} + \mathbf{B}) \right) \bm{u}_1^{n+1} = \left( \mathbf{M}+\frac{1}{2} \Delta t (D_1 \mathbf{K} + \mathbf{B}) \right) \bm{u}_1^{n}.
\end{equation}

\item[]{\bf Stage 2 ($i=2$):} Having obtained $\bm{u}_1^{n+1}$,  we can compute $\hat{\bm{u}}_1^{n+1/2}$ using \eqref{eq:u-hat} and approximate $b_j$ for $i=2$ as
\begin{equation}
	\tilde{b}_j = \langle [\hat{{u}}_2^{n+1/2} f_2(\hat{{u}}_1^{n+1/2}, \tilde{{u}}_2^{n+1/2}, \tilde{{u}}_3^{n+1/2})],~\varphi_j\rangle, ~~~ (j = 1,\dots,nd).
\end{equation}
Because $f_2$ is explicitly represented by known functions, we can follow the same procedures as in Stage 1 to approximate $\bm{b}_2$ as
\begin{equation}
	\tilde{\bm{b}}_2 = \mathbf{B} \hat{\bm{u}}_2^{n+1/2}, 
\end{equation}
where the entries of $\mathbf{B}$ are given by
\begin{equation}
	\mathbf{B}_{jk} = \langle f_2(\hat{{u}}_1^{n+1/2}, \tilde{{u}}_2^{n+1/2}, \tilde{{u}}_3^{n+1/2}) \varphi_k, ~\varphi_j \rangle := \langle \tilde{f_2} \varphi_k, ~\varphi_j \rangle.
\end{equation}
Then we obtain $\bm{u}_2^{n+1}$ by solving
\begin{equation}\label{eq:stage2}
	\left( \mathbf{M}-\frac{1}{2} \Delta t (D_1 \mathbf{K} + \mathbf{B}) \right) \bm{u}_2^{n+1} = \left( \mathbf{M}+\frac{1}{2} \Delta t (D_1 \mathbf{K} + \mathbf{B}) \right) \bm{u}_2^{n}.
\end{equation}

\item[]{\bf Stage 3 ($i=3$):} We now compute $\hat{\bm{u}}_2^{n+1/2}$ and approximate $b_j$ as
\begin{equation}
	\tilde{b}_j = \langle [\hat{{u}}_3^{n+1/2} f_3(\hat{{u}}_1^{n+1/2}, \hat{{u}}_2^{n+1/2}, \tilde{{u}}_3^{n+1/2})],~\varphi_j\rangle, ~~~ (j = 1,\dots,nd).
\end{equation}
Then we have the approximation for $\bm{b}_3$ as
\begin{equation}
	\tilde{\bm{b}}_3 = \mathbf{B} \hat{\bm{u}}_3^{n+1/2}, 
\end{equation}
where the entries of $\mathbf{B}$ are given by
\begin{equation}
	\mathbf{B}_{jk} = \langle f_3(\hat{{u}}_1^{n+1/2}, \hat{{u}}_2^{n+1/2}, \tilde{{u}}_3^{n+1/2}) \varphi_k, ~\varphi_j \rangle := \langle \tilde{f_3} \varphi_k, ~\varphi_j \rangle.
\end{equation}
Finally, we obtain $\bm{u}_3^{n+1}$ by solving
\begin{equation}\label{eq:stage3}
	\left( \mathbf{M}-\frac{1}{2} \Delta t (D_1 \mathbf{K} + \mathbf{B}) \right) \bm{u}_3^{n+1} = \left( \mathbf{M}+\frac{1}{2} \Delta t (D_1 \mathbf{K} + \mathbf{B}) \right) \bm{u}_3^{n}.
\end{equation}
\end{itemize}
From each of the above stages we obtain a complete solution $\lbrace \bm{u}_{i}^{n+1}\rbrace_{i=1}^{3}$ at the current time  $t_{n+1}$. 
The stage-by-stage algorithm mentioned above is semi-implicit and formally second-order convergent in time. 
Moreover, updating function $\tilde{f_i}$ at stages 2 and 3 based on the already known solutions from stages 1 and 2 may enhance the stability property without additional computational cost.


Next, we show that the weak form in \eqref{eq:weak-form} has a unique solution. We denote $V=H^1(\Omega)$ and assume $D_i$ and $f_i$ are sufficiently smooth. 
We also assume that the initial and boundary conditions meet the compatibility and smoothness requirements such that the original problem has a unique solution. 
We assume that the exact solution is bounded in both space and time. Then, we obtain the following result:	

\begin{Theorem}
\label{thm-1}
	Assume $D_i \ge 0$ and $f_i(u_1, u_2, u_3) \in L^\infty(\Omega)$ for $i=1,2,3$ are evaluated explicitly.
	Then for each $i=1,2,3$, the variational problem \eqref{eq:weak-form} admits a unique solution $u_i \in H^1(\Omega)$.
\end{Theorem}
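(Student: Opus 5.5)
Since $f_i$ is evaluated explicitly, the reaction term in \eqref{eq:weak-form} becomes linear in $u_i$ with a bounded coefficient, and I would prove Theorem \ref{thm-1} by Lax--Milgram applied to the time-discretized (Crank--Nicolson) version of \eqref{eq:weak-form}, which is the continuous counterpart of the stage problems \eqref{eq:stage1}--\eqref{eq:stage3}. Fix $i$ and a time level, and set $\tilde f_i := f_i(\cdot)$ evaluated at already known data, so $\tilde f_i\in L^\infty(\Omega)$ by hypothesis. Replacing $\dot u_i$ by $(u_i-u_i^n)/\Delta t$ and $u_i$ on the right by $\tfrac12(u_i+u_i^n)$, the problem reads: find $u_i\in V$ with
\[
a(u_i,v)=\ell(v)\quad\forall v\in V,
\]
where
\[
a(w,v)=\langle w,v\rangle+\tfrac{\Delta t}{2}\langle D_i\nabla w,\nabla v\rangle-\tfrac{\Delta t}{2}\langle \tilde f_i w,v\rangle ,
\]
and $\ell(v)$ collects the known terms in $u_i^n$ with the opposite signs in front of the $\Delta t$ terms.

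\textbf{Continuity.} Cauchy--Schwarz gives
\[
|a(w,v)|\le\Bigl(1+\tfrac{\Delta t}{2}\|D_i\|_\infty+\tfrac{\Delta t}{2}\|\tilde f_i\|_\infty\Bigr)\|w\|_{H^1}\|v\|_{H^1}.
\]
In the same way $\ell$ is a bounded functional on $V$, since $u_i^n\in H^1(\Omega)$.

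\textbf{Coercivity.} This is the main obstacle, because $D_i\ge0$ is allowed to vanish and $\tilde f_i$ may be positive. For the first issue I would assume, as the scaling \eqref{rescale} gives, $D_i\ge d_0>0$; with $D_i=0$ alone one only gets $L^2$ coercivity and must work in $L^2$. For the second,
\[
a(v,v)\ge \Bigl(1-\tfrac{\Delta t}{2}\|\tilde f_i^+\|_\infty\Bigr)\|v\|^2_{L^2}+\tfrac{\Delta t}{2}d_0\|\nabla v\|^2_{L^2},
\]
so coercivity holds when $\Delta t\,\|\tilde f_i^+\|_\infty<2$. To avoid this restriction I would instead use a G\aa rding-type argument: the bilinear form is coercive up to a compact $L^2$ perturbation. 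By Rellich, the associated operator is Fredholm of index zero, so uniqueness implies existence. Uniqueness could then be argued from the bounded-solution assumption and a Gronwall-type estimate for the continuous weak form: the difference $e$ of two solutions satisfies
\[
\tfrac12\tfrac{d}{dt}\|e\|^2\le\|\tilde f_i\|_\infty\|e\|^2 ,
\]
with $e(0)=0$, hence $e\equiv0$.

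\textbf{Conclusion.} I would present the simpler route in the proof: Lax--Milgram under the mild step condition, which is automatically met for the bounded densities near $[0,1]$ with $f_i\le1$. I would remark that the Gronwall argument gives uniqueness of the time-continuous weak solution unconditionally, and that the Fredholm route gives unconditional existence whenever uniqueness holds. Applying this for each $i=1,2,3$ in turn, with the stage-by-stage data, gives the claim.
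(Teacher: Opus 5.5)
Your Lax--Milgram argument is correct, but it proves a different result from the one stated. You replace \eqref{eq:weak-form} by the Crank--Nicolson stage problem, which adds the mass term $\langle w,v\rangle$. This is arguably the more faithful reading, since it is what is solved at each stage. You also add two hypotheses the theorem does not make: $D_i\ge d_0>0$ and $\Delta t\,\Vert\tilde f_i^+\Vert_{L^\infty(\Omega)}<2$.

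The paper instead fixes a time level and takes $a_i(u,v)=\langle D_i\nabla u,\nabla v\rangle-\langle\tilde f_i u,v\rangle$ with no mass term. It verifies continuity and the G\r{a}rding-type bound $|a_i(u,u)|+C_0\Vert u\Vert_{L^2(\Omega)}^2\ge D_i\Vert\nabla u\Vert_{L^2(\Omega)}^2$, and then cites Ciarlet for unique solvability.

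Your step condition is also not ``automatically met''. The bound $f_i\le 1$ needs nonnegative arguments, but the extrapolations $\tfrac32 u_j^n-\tfrac12 u_j^{n-1}$ can be negative. Even granting $\tilde f_i\le 1$, the condition amounts to $\Delta t<2$, which is a genuine step restriction.

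The route you sketch for removing these restrictions has a real gap. The Gronwall estimate gives uniqueness for the time-continuous evolution problem. The Fredholm alternative, however, needs injectivity of the stationary operator $w\mapsto a(w,\cdot)$ on $V$. These are different problems, and that injectivity can fail. With Neumann conditions and $\tilde f_i\equiv 2/\Delta t$, one has $a(1,v)=0$ for all $v\in V$, so the constants lie in the kernel. Since $\ell(1)=2\int_\Omega u_i^n\,d\bm{x}$, there is then no solution at all for positive data.

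The paper's citation hides the same defect. Continuity plus a G\r{a}rding inequality yields only a Fredholm alternative, not uniqueness. For the paper's form, $\tilde f_i\equiv 0$ (e.g.\ $f_1$ at the equilibrium $(1,0,0)$) again puts the constants in the kernel. For $D_i=0$ the inequality gives no $H^1$ control whatsoever.

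So the theorem does not hold in the generality stated ($D_i\ge 0$, arbitrary bounded $\tilde f_i$), and neither argument proves it. What you can prove is your restricted version. State it with its hypotheses, $D_i\ge d_0>0$ and $\Delta t\,\Vert\tilde f_i^+\Vert_{L^\infty(\Omega)}<2$ (the latter automatic when $\tilde f_i\le 0$), instead of claiming the unconditional one.
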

\noindent
\textbf{Proof.} At a fixed time level, denote the bilinear form $a_i: V \times V \to \mathbb{R}$ as
	$$a_i(u,v) := <D_i \nabla u, \nabla v> - <\tilde{f}_i u, v>, \quad (i=1,2,3). $$
where $\tilde{f}_i = f_i (u_1, u_2, u_3)$ is treated as a known function owing to the explicit evaluation in the stage-by-stage scheme.

First, we show that $a_i(\cdot,\cdot)$ is continuous on $V \times V$. Since $\tilde{f}_i \in L^\infty(\Omega)$, we have
\begin{eqnarray*} 
\begin{aligned}
|a_i(u,v)| & = \left|\int_{\Omega} D_i \nabla u \cdot \nabla v \, d\bm{x} - \int_{\Omega} \tilde{f}_i u v \, d\bm{x} \right| \\
& \leq D_i \Vert\nabla u \Vert_{L^2(\Omega)} \Vert \nabla v \Vert_{L^2(\Omega)} + \Vert \tilde{f}_i \Vert_{L^{\infty}(\Omega)} \Vert u \Vert_{L^2(\Omega)} \Vert v \Vert_{L^2(\Omega)} \\
& \leq C  \Vert u \Vert_{H^1(\Omega)} \Vert v \Vert_{H^1(\Omega)}, \\
\end{aligned}
\end{eqnarray*}
where $C > 0$ is independent of $u$ and $v$. Thus, $a_i(\cdot,\cdot)$ is continuous on $V \times V$.

Next, we show that $a_i(\cdot,\cdot)$ satisfies a G\r{a}rding-type inequality on $V$. For any $u \in V$, we have that
\begin{eqnarray*} 
\begin{aligned}
|a_i(u,u)| & = \left|\int_{\Omega} D_i \nabla u \cdot \nabla u \, d\bm{x} - \int_{\Omega} \tilde{f}_i u^2 \, d\bm{x} \right| \\
& \geq D_i \Vert\nabla u\Vert_{L^2(\Omega)}^2 - \Vert \tilde{f}_i \Vert_{L^\infty(\Omega)} \Vert u\Vert_{L^2(\Omega)}^2. \\
\end{aligned}
\end{eqnarray*}
Hence, there exists a constant $C_0 > 0$ such that
\begin{equation}
|a_i(u,u)| + C_0 \Vert u \Vert_{L^2(\Omega)}^2 \geq D_i \Vert \nabla u \Vert_{L^2(\Omega)}^2, \quad \forall u \in V.
\end{equation}	
This indicates that $a_i(\cdot,\cdot)$ satisfies the G\r{a}rding-type inequality in $V$.

Because $a_i(\cdot,\cdot)$ is continuous and satisfies a G\r{a}rding-type inequality, and the right-hand side functional is continuous on $V$,
the standard results for elliptic problems with bounded lower-order terms (see Ciarlet \cite{Ciarlet-1978}) imply that the variational problem \eqref{eq:weak-form} admits a unique solution 
$u_i \in H^1(\Omega)$ for each $i=1,2,3$.

\proofend

\begin{Theorem}
Let $V_h \subset V$ be a conforming finite dimensional subspace associated with regular triangulation $\mathcal{T}_h$. Assume $D_i \ge 0$ and $f_i(u_1, u_2, u_3) \in L^\infty(\Omega)$ ($i=1,2,3$) are evaluated explicitly.
Let $u_i \in V$ be the solution of \eqref{eq:weak-form} and let $u_i^h \in V_h$ be its Galerkin approximation defined by 
$$ a_i (u_i^h, v_h) = l_i(v_h), \quad \forall v_h \in V_h, $$
where $a_i(\cdot,\cdot)$ is the bilinear form defined in Theorem \ref{thm-1} and $l_i(\cdot)$ is the corresponding linear functional.
Then, there exists a constant $C > 0$ independent of $h$ such that
\begin{equation}
	\Vert u_i - u_i^h \Vert_{H^1(\Omega)} \le C \inf\limits_{v_h \in V_h} \Vert u_i - v_h \Vert_{H^1(\Omega)}.
\end{equation}
\end{Theorem}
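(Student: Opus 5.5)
This is a Céa-type estimate, and I would prove it by combining Galerkin orthogonality with the continuity and Gårding bounds from Theorem \ref{thm-1}. Write $e = u_i - u_i^h$. Since $V_h \subset V$, subtracting the discrete problem from \eqref{eq:weak-form} tested with $v = v_h$ gives
\begin{equation*}
a_i(e, v_h) = 0 \quad \text{for all } v_h \in V_h .
\end{equation*}

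If $a_i$ were coercive on $V$, the argument would close immediately. For any $v_h \in V_h$,
\begin{equation*}
\alpha \Vert e\Vert_{H^1}^2 \le a_i(e,e) = a_i(e, u_i - v_h) \le C\Vert e\Vert_{H^1}\Vert u_i - v_h\Vert_{H^1},
\end{equation*}
and taking the infimum over $v_h$ would give the claim with constant $C/\alpha$. So the first step is to check when coercivity actually holds. In the time-stepping context, $a_i$ really arises from $\mathbf{M} - \frac{1}{2}\Delta t (D_i\mathbf{K} + \mathbf{B})$, so the relevant form carries a mass term $\frac{1}{\Delta t}\langle u, v\rangle$ on the correct side. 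With $D_i > 0$ (note $D_i \ge 0$ alone is not enough) and $\frac{2}{\Delta t} > \Vert \tilde f_i\Vert_{L^\infty}$, the Gårding inequality of Theorem \ref{thm-1} upgrades to genuine $V$-ellipticity, with
\begin{equation*}
\alpha = \min\Bigl(\tfrac{D_i}{2},\ \tfrac{1}{\Delta t} - \tfrac{1}{2}\Vert\tilde f_i\Vert_{L^\infty}\Bigr) > 0 .
\end{equation*}
I would state this regime explicitly and prove the estimate there first.

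The main obstacle is the general case, where only the Gårding inequality
\begin{equation*}
a_i(u,u) + C_0\Vert u\Vert_{L^2}^2 \ge c\Vert u\Vert_{H^1}^2
\end{equation*}
is available. Here I would follow the Schatz argument. First, the uniqueness in Theorem \ref{thm-1} gives injectivity, and the Fredholm alternative then yields well-posedness of both the primal and the adjoint problem. Second, I need an Aubin–Nitsche duality bound,
\begin{equation*}
\Vert e\Vert_{L^2} \le \eta(h)\,\Vert e\Vert_{H^1}, \qquad \eta(h) \to 0 \text{ as } h \to 0 .
\end{equation*}
To get it, solve the adjoint problem $a_i(v, z) = \langle e, v\rangle$ for all $v \in V$ and use Galerkin orthogonality, which gives $\Vert e\Vert_{L^2}^2 = a_i(e, z - z_h)$. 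The decay $\eta(h) \to 0$ then follows from approximability of $V_h$, using either adjoint $H^2$-regularity or, more minimally, a compactness argument based on the compact embedding $H^1 \hookrightarrow L^2$. Inserting this into the Gårding inequality,
\begin{equation*}
c\Vert e\Vert_{H^1}^2 \le a_i(e, u_i - v_h) + C_0\,\eta(h)^2\Vert e\Vert_{H^1}^2,
\end{equation*}
and for $h \le h_0$ the second term can be absorbed into the left-hand side. This yields the quasi-optimal bound with $C$ independent of $h$.

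For $h > h_0$ there are only finitely many meshes, or equivalently a uniform discrete inf-sup constant on the coarse range, so the bound holds there too after enlarging $C$. In summary, I expect the duality step ($\eta(h) \to 0$, which needs adjoint regularity on the piecewise smooth domain) and the sign and positivity hypotheses on $D_i$ to be the delicate points. The coercive (small $\Delta t$) case is routine.
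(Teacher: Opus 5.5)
Your core argument is correct and more careful than the paper's proof. The paper only asserts that continuity plus the G\r{a}rding inequality make both the continuous and the discrete problems well posed, and then cites ``standard results'' for a quasi-optimal bound whose constant supposedly depends only on the continuity and G\r{a}rding constants. For a G\r{a}rding-type form, that standard result is precisely the Schatz argument you write out: Galerkin orthogonality, the duality bound $\Vert e\Vert_{L^2}\le\eta(h)\Vert e\Vert_{H^1}$, and absorption. The compactness route you mention already gives $\eta(h)\to 0$, so no adjoint regularity on the piecewise smooth boundary is needed. Your version also exposes what the paper's sketch hides. First, $D_i>0$ is required. Second, injectivity of $a_i$ is load-bearing. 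You may cite it from Theorem~\ref{thm-1}, but it is an assumption, not a consequence of G\r{a}rding; it fails for $\tilde f_i\equiv 0$, where constants lie in the kernel. Third, the constant and the threshold $h_0$ depend on the inverse of the continuous operator through $\eta(h)$, not only on the continuity and G\r{a}rding constants. Your preliminary coercive case concerns the mass-augmented Crank--Nicolson form, not the $a_i$ of the statement, so it is motivation rather than part of the proof.

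The genuine gap is your last step, extending the estimate to $h>h_0$. Finiteness of the coarse part of the family is not among the hypotheses: a regular family may contain infinitely many meshes with $h>h_0$. A uniform discrete inf-sup constant on coarse meshes is neither equivalent to finiteness nor implied by anything you have. Since $\tilde f_i$ is positive wherever the densities are small (e.g.\ $\tilde f_1\approx 1$ near $u=0$), $a_i$ is genuinely indefinite. A coarse conforming $V_h$ can then make the Galerkin matrix $D_i\mathbf{K}-\mathbf{B}$ singular while the continuous problem is uniquely solvable. For constant $\tilde f_i$ this happens exactly when $\tilde f_i/D_i$ is a discrete Neumann eigenvalue of that mesh, which generically is not a continuous one. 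On such a mesh $u_i^h$ does not exist or is not unique, and on perturbations of it the discrete inf-sup constant, hence the quasi-optimality constant, can be arbitrarily poor. So your argument proves the estimate for $h\le h_0$; applied with $u_i=0$, it also gives existence and uniqueness of $u_i^h$ there. That is all the standard theory provides. The all-$h$ version needs an extra hypothesis, for example finitely many coarse meshes each with a uniquely solvable discrete problem, a uniform discrete inf-sup condition, or coercivity (such as $\tilde f_i\le-\delta<0$, or the mass term you introduced). You should state the result with the restriction $h\le h_0$. The paper's statement overclaims in the same way, and your final sentence inherits that overclaim rather than proving it.
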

\noindent
\textbf{Proof.} From Theorem \ref{thm-1}, the bilinear form $a_i(\cdot,\cdot)$ is continuous and satisfies a G\r{a}rding-type inequality on $V \times V$. 
Consequently, both the continuous and discrete variational problems are well posed.
Standard results on Galerkin approximations for elliptic problems with bounded lower-order terms (see Ciarlet \cite{Ciarlet-1978}) imply a quasi-optimal error estimate:
\begin{equation}
	\Vert u_i - u_i^h \Vert_{H^1(\Omega)} \le C \inf\limits_{v_h \in V_h} \Vert u_i - v_h \Vert_{H^1(\Omega)},
\end{equation}
where the constant $C$ depends only on the continuity and G\r{a}rding-type inequality constants of $a_i(\cdot,\cdot)$, but is independent of the mesh size $h$.
This completes this proof. 

\proofend

Next, we derive the a priori error estimate for the semi-discretized variational problem \eqref{eq:semi-discrete-t}.
Because $H^1$ functions are not continuous in two dimensions, we first consider $u_i \in H^{k+1}(\Omega)$ for some $k \ge 1$. 
For convenience, we ignore the time dependence of the variables and use $u$ to denote any of the $u_i$ $(i=1,2,3)$.

Denote by $P_k$ and $Q_k$ the space of polynomials of total and partial degrees less than or equal to $k$, respectively. 
Then $P_k \subset Q_k$ and $Q_k \subset P_{2k}$.
Consider a regular family of simplicial meshes $\{ \mathcal{T}_h \}$, where $h$ denotes the mesh size defined as $h:= \max\limits_{K \in \mathcal{T}_h} \text{diam}(K)$.
For any element $K \in \mathcal{T}_h$, denotes $\rho(K)$ the diameter of the inscribed circle. The mesh is regular, meaning that there exists a constant $\sigma>0$ such that
$\max\limits_{K \in \mathcal{T}_h}\frac{h(K)}{\rho(K)} \le \sigma$. Assume all the finite elements $(K,P_K,\Sigma_K)$, $K \in \cup_h \left\{\mathcal{T}_h\right\}$, 
are affine-equivalent to a single reference finite element $(\hat{K},\hat{P},\hat{\Sigma})$.

Denote $V_h \subset V$ the linear finite element space associated with mesh $\mathcal{T}_h$. Then, we have the following lemma.

\begin{Lemma} (\cite{Ciarlet-1978}): 
	There exists a constant $C$ independent of $h$ such that for any vector function $v \in H^{k+1}(\Omega)$,
	\begin{equation}
		\label{err1}
		\Vert v - \Pi_h v \Vert_{H^1(\Omega)} \le C h^k |v|_{H^{k+1}(\Omega)},
	\end{equation}
	where $\Pi_h$ denotes the $V_h$-interpolation operator.
\end{Lemma}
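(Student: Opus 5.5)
The plan is the classical Bramble--Hilbert argument combined with affine scaling, as in Ciarlet. First I would reduce to a single element. Since $\Pi_h$ is built elementwise from the degrees of freedom of affine-equivalent elements, we have $(\Pi_h v)|_K = \Pi_K (v|_K)$. Hence
\[
\Vert v-\Pi_h v\Vert_{H^1(\Omega)}^2=\sum_{K\in\mathcal{T}_h}\Vert v-\Pi_K v\Vert_{H^1(K)}^2 .
\]
It is therefore enough to prove, for each $K$ and for $m=0,1$,
\[
|v-\Pi_K v|_{H^m(K)}\le C\,h_K^{k+1-m}|v|_{H^{k+1}(K)} .
\]
After that, I sum over elements and use $h_K\le h\le \operatorname{diam}\Omega$, so that the $m=0$ term is absorbed into $Ch^k$. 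The degree $k$ has to match the local space: $\hat P\supset P_k$ ($P_k$ or $Q_k$ elements). For the linear space, $k=1$.

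On the reference element I would proceed in four steps.
\begin{itemize}
\item Because $k+1\ge 2>d/2=1$ in two dimensions, the Sobolev embedding $H^{k+1}(\hat K)\hookrightarrow C^0(\hat K)$ holds. So the nodal degrees of freedom are bounded, and $\hat\Pi: H^{k+1}(\hat K)\to \hat P\subset H^1(\hat K)$ is a bounded linear operator. This is exactly why the paper first restricts to $H^{k+1}$ rather than $H^1$.
\item Since $P_k\subset\hat P$, the operator $I-\hat\Pi$ annihilates $P_k$.
\item The Bramble--Hilbert (Deny--Lions) lemma then gives
\[
\Vert \hat v-\hat\Pi\hat v\Vert_{H^1(\hat K)}=\inf_{p\in P_k}\Vert (I-\hat\Pi)(\hat v+p)\Vert_{H^1(\hat K)}\le \Vert I-\hat\Pi\Vert\,\inf_{p\in P_k}\Vert \hat v+p\Vert_{H^{k+1}(\hat K)}\le \hat C|\hat v|_{H^{k+1}(\hat K)} .
\]
\item Next I transport the estimate through the affine map $F_K(\hat x)=B_K\hat x+b_K$. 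Affine equivalence gives $\widehat{\Pi_K v}=\hat\Pi\hat v$. The standard scaling inequalities are
\[
|v|_{H^m(K)}\le C\Vert B_K^{-1}\Vert^m|\det B_K|^{1/2}|\hat v|_{H^m(\hat K)},\qquad |\hat v|_{H^{k+1}(\hat K)}\le C\Vert B_K\Vert^{k+1}|\det B_K|^{-1/2}|v|_{H^{k+1}(K)} .
\]
Combined with $\Vert B_K\Vert\le h_K/\hat\rho$ and $\Vert B_K^{-1}\Vert\le \hat h/\rho_K$, they yield a factor $h_K^{k+1}\rho_K^{-m}$.
\end{itemize}

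The regularity assumption $h_K/\rho_K\le\sigma$ turns $h_K^{k+1}\rho_K^{-m}$ into $\sigma^m h_K^{k+1-m}$. This makes the constant independent of $h$: it depends only on $\hat K$, $\hat P$, $k$ and $\sigma$.

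The main obstacle is the Bramble--Hilbert step. It needs the Deny--Lions quotient-norm equivalence $\inf_p\Vert\hat v+p\Vert_{H^{k+1}}\le C|\hat v|_{H^{k+1}}$, which I would prove via the compact embedding $H^{k+1}(\hat K)\hookrightarrow H^k(\hat K)$ and a contradiction argument. If the mesh contains quadrilaterals, as in the paper's setup, affine equivalence only covers parallelograms. I would simply invoke the lemma's stated hypothesis of affine-equivalent elements and not treat general bilinear maps.
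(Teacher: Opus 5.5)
Your proposal is correct and is essentially the argument the paper relies on: the paper gives no proof of its own but defers to Ciarlet, whose proof is this same combination of the Deny--Lions/Bramble--Hilbert estimate on the reference element, affine scaling of Sobolev seminorms, and the regularity bound $h_K/\rho_K\le\sigma$. You also correctly note that $k$ must match the local polynomial space, so only $k=1$ is admissible for the linear space the paper names; this is a worthwhile clarification of the statement as written.
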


Let $u^h$ be an approximation of $u$ for $V_h$. Then, we obtain the following results:

\begin{Theorem}
\label{thm-2}
	There exists a constant $C$ independent of $h$ such that, if $u \in H^{k+1}(\Omega)$, we have
	\begin{equation}
		\label{err2}
		\Vert u - u^h \Vert_{H^1(\Omega)} \le C h^k | u |_{H^{k+1}(\Omega)}.
	\end{equation}
\end{Theorem}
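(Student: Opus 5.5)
The plan is to combine the quasi-optimality estimate of the preceding theorem (the Céa-type bound obtained from continuity and the G\r{a}rding-type inequality of $a_i(\cdot,\cdot)$) with the interpolation estimate \eqref{err1}. Interpret $u^h$ as the Galerkin approximation of $u$ in $V_h$, defined by $a_i(u^h,v_h)=l_i(v_h)$ for all $v_h\in V_h$. The bilinear form uses the explicitly evaluated $\tilde f_i\in L^\infty(\Omega)$, so the problem is linear elliptic with a bounded lower-order term. Then the previous theorem gives
\begin{equation*}
\Vert u-u^h\Vert_{H^1(\Omega)}\le C_1\inf_{v_h\in V_h}\Vert u-v_h\Vert_{H^1(\Omega)}.
\end{equation*}
Here $C_1$ depends only on $D_i$, $\Vert\tilde f_i\Vert_{L^\infty(\Omega)}$ and the G\r{a}rding constants, and not on $h$.

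Next, choose the particular competitor $v_h=\Pi_h u$ in the infimum. This is legitimate because $u\in H^{k+1}(\Omega)$ with $k\ge1$. In two dimensions, $H^{2}(\Omega)\hookrightarrow C^0(\overline\Omega)$, so the nodal values of $u$ are well defined and $\Pi_h u\in V_h$. This is exactly why the paper passes from $H^1$ to $H^{k+1}$. The lemma (Ciarlet) then yields $\Vert u-\Pi_h u\Vert_{H^1(\Omega)}\le C_2h^k|u|_{H^{k+1}(\Omega)}$. This requires the regularity of the family $\{\mathcal T_h\}$ and the affine equivalence to a single reference element, both of which are assumed. Combining the two bounds gives \eqref{err2} with $C=C_1C_2$, which is independent of $h$.

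The main obstacle is justifying that $C_1$ is genuinely $h$-independent. A G\r{a}rding inequality alone, rather than coercivity, does not automatically give discrete well-posedness or a uniform Céa constant. I would handle this in one of two ways. The first is the Schatz-type duality argument: one needs uniqueness for the continuous problem, which Theorem~\ref{thm-1} provides, and $h$ sufficiently small, after which the constant is uniform. The second is to note that in the time-stepping context the relevant form actually carries the mass term $\frac{1}{\Delta t}\langle u,v\rangle$. That term makes the form coercive once $\frac{1}{\Delta t}>\Vert\tilde f_i\Vert_{L^\infty(\Omega)}$, and ordinary Céa's lemma then applies directly. I would state the result for $h\le h_0$, or under that coercivity remark, and otherwise rely on the preceding theorem as given.
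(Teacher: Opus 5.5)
Your argument is the paper's own: C\'ea-type quasi-optimality, then the competitor $v_h=\Pi_h u$ and \eqref{err1}. Your extra discussion of why the C\'ea constant is $h$-independent under a mere G\r{a}rding inequality is a correct refinement that the paper skips, although of your two fixes only the $\frac{1}{\Delta t}$ mass-term coercivity (which also needs $D_i>0$) is self-contained, since the uniqueness you borrow from Theorem~\ref{thm-1} for the Schatz argument does not follow from G\r{a}rding alone (for $\tilde f_i\equiv 0$ with Neumann conditions, the constants lie in the kernel); one further caveat you share with the paper is that \eqref{err1} with rate $h^k$ also requires $P_k$ to be contained in the local element space, so for the linear/$Q_1$ elements actually used the estimate is valid only for $k=1$.
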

\noindent
\textbf{Proof.} By C\'ea's Lemma, we have
\begin{eqnarray*} 
\begin{aligned}
\Vert u - u^h \Vert_{H^1(\Omega)} & \le C \inf\limits_{v^h \in V_h} \Vert u - v^h \Vert_{H^1(\Omega)} \\
& \le C \Vert u - \Pi_h u \Vert_{H^1(\Omega)}.
\end{aligned}
\end{eqnarray*}
Then, \eqref{err2} follows from \eqref{err1}.

\proofend

In terms of $L^2$-norm, we have:

\begin{Theorem}
	There exists a constant $C$ independent of $h$ such that, if $u \in H^{k+1}(\Omega)$, we have
	\begin{equation}
		\label{err3}
		\Vert u - u^h \Vert_{L^2(\Omega)} \le C h^{k+1} \sum_{|m|=k+1} \Vert \p^m u \Vert_{L^2(\Omega)},
	\end{equation}
where $\p^m u$ is the $m$-th order partial derivative.
\end{Theorem}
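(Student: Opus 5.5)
The $L^2$ estimate \eqref{err3} will follow from the $H^1$ estimate of Theorem \ref{thm-2} by the Aubin--Nitsche duality argument, adapted to the non-coercive but G\r{a}rding-type bilinear form $a_i(\cdot,\cdot)$ of Theorem \ref{thm-1}. Write $e = u - u^h$. Subtracting the discrete problem $a_i(u^h,v_h)=l_i(v_h)$ from the continuous one gives Galerkin orthogonality, $a_i(e,v_h)=0$ for all $v_h\in V_h$. The error will be tested against the solution of an adjoint problem.

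\textbf{Step 1: the dual problem.} Find $\phi\in V$ such that $a_i(v,\phi)=\langle e, v\rangle$ for all $v\in V$. Since $a_i$ is symmetric, this is just \eqref{eq:weak-form} at a frozen time level with right-hand side $e$, namely $-D_i\Delta\phi - \tilde f_i\phi = e$ with homogeneous Neumann data. Well-posedness follows as in Theorem \ref{thm-1}, which I will take as given: the problem is uniquely solvable, i.e.\ $0$ is not an eigenvalue. I will then assume the elliptic regularity estimate
\begin{equation*}
\Vert \phi\Vert_{H^2(\Omega)} \le C_R \Vert e\Vert_{L^2(\Omega)} .
\end{equation*}
This requires $\Omega$ convex, or a smooth boundary, and $D_i>0$ and $\tilde f_i\in L^\infty(\Omega)$. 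It is the standard hypothesis implicit in citing Ciarlet \cite{Ciarlet-1978}.

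\textbf{Step 2: duality.} Taking $v=e$ and using Galerkin orthogonality, for any $v_h\in V_h$,
\begin{equation*}
\Vert e\Vert_{L^2(\Omega)}^2 = a_i(e,\phi) = a_i(e,\phi-v_h) \le C\Vert e\Vert_{H^1(\Omega)}\Vert \phi - v_h\Vert_{H^1(\Omega)},
\end{equation*}
where the inequality is the continuity bound from Theorem \ref{thm-1}. Choose $v_h=\Pi_h\phi$ and apply the interpolation Lemma with $k=1$. This gives $\Vert\phi-\Pi_h\phi\Vert_{H^1}\le Ch|\phi|_{H^2}\le Ch\Vert e\Vert_{L^2}$. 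Dividing by $\Vert e\Vert_{L^2}$ yields $\Vert e\Vert_{L^2}\le Ch\Vert e\Vert_{H^1}$.

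\textbf{Step 3: conclusion.} Combine this with Theorem \ref{thm-2}, $\Vert e\Vert_{H^1}\le Ch^k|u|_{H^{k+1}}$. Since $|u|_{H^{k+1}}$ is equivalent to, and bounded by a constant times, $\sum_{|m|=k+1}\Vert\p^m u\Vert_{L^2}$, we obtain \eqref{err3}.

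\textbf{Main obstacle.} The delicate step is Step 1. Because $a_i$ is only G\r{a}rding-type rather than coercive, unique solvability of the dual problem with a uniform bound is not automatic. It needs a Fredholm alternative argument ruling out $0$ as an eigenvalue of $-D_i\Delta-\tilde f_i$, which is implicitly what Theorem \ref{thm-1} asserts. The $H^2$ regularity must also be justified. For the quadrilateral or polygonal domains used here, I would invoke convexity of $\Omega$. For the non-coercive case, the Schatz argument would also make Theorem \ref{thm-2} rigorous for $h$ small enough, so the final estimate should be stated for $h\le h_0$ if needed.
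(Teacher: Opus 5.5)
Your proposal is correct and follows the same route as the paper. The paper's proof simply invokes the Aubin--Nitsche lemma together with Theorem~\ref{thm-2} and defers details to Ciarlet; you have written out that duality argument explicitly (dual problem, Galerkin orthogonality, $H^2$-regularity, interpolation with $k=1$), and the extra hypotheses you flag are genuinely needed but left implicit in the paper: $D_i>0$, convexity of $\Omega$ for the $H^2$ bound, invertibility of $-D_i\Delta-\tilde f_i$ with Neumann data, and $h\le h_0$ in the merely G\r{a}rding-type case.
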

\noindent
\textbf{Proof.} The result follows from the Aubin-Nitsche Lemma and Theorem \ref{thm-2}. Further details can be found in Ciarlet (\cite{Ciarlet-1978}).

\proofend

\begin{Remark}
The results have been established under the assumptions that the solution $u$ is sufficiently smooth (in $H^{k+1}(\Omega)$ for some $k \ge 1$) and that the $V_h$-interpolant $\Pi_h u$ exists. However, it is still possible to prove the convergence of the method if the solution $u$ belongs to $H^1(\Omega)$ and ``minimal'' assumptions hold. See Ciarlet (\cite{Ciarlet-1978}) for more details.
\end{Remark}


\subsection{Stability analysis}

Numerical approximation of competitive-diffusion problems is challenging. This is particularly the case in the ecological application of competition-diffusion models, 
where spatial mobility (diffusion) is typically very small compared to the rate of interactions between the species. Such problems are characterized by the presence of a 
very small spatial scale as internal layers. In such cases, the stability of the associated scheme(s) is one of the most critical issues in the numerical approximation of their solutions.

The implicit-explicit (IMEX) schemes use a higher-order explicit scheme for the reaction terms and an implicit scheme of the same order for the linear diffusion terms. 
This usually leads to weakened stability behavior because the dominant term (reaction term) is explicit. Hence, in addition to treating the diffusion terms implicitly, 
the stage-by-stage method outlined in Section~\ref{sec:temporal} enhances the stability property using a semi-linearized but high-order scheme based on the Crank-Nicolson 
and Adams-Bashforth methods for the reaction term.

It is essential to analyze how the discretization of the reaction term affects the stability of the proposed numerical scheme. In particular, we analyze how the discrete scheme based on 
the Crank-Nicolson and Adams-Bashforth methods imitates the behavior of a continuous problem. To this end, we consider the following reduced scalar problem, obtained by ignoring the $u_2, u_3$ and other diffusion terms
\begin{equation}\label{eq:simplified-model}
\dot{u} = \lambda u(1 - u).
\end{equation}
For simplicity, we also assume that $r_1 = a_{11}=-\lambda > 0$, and drop the subscript 1. Scalar problem \eqref{eq:simplified-model} has $u = 0$ and $u=1$ as equilibrium solutions, 
from which only the former is asymptotically stable. Applying the Crank-Nicolson and Adams-Bashforth semi-linearized scheme to the reduced model \eqref{eq:simplified-model} leads to:
\begin{equation}\label{eq:simple-scheme}
u^{n+1} = G(u^{n-1},u^n;z),
\end{equation}
where
\begin{equation}\label{eq:rhs-simple-scheme}
G(u^{n-1},u^n;z) = \dfrac{1+\frac{1}{2}z(1+\frac{1}{2}u^{n-1}-\frac{3}{2}u^{n})}{1-\frac{1}{2}z(1+\frac{1}{2}u^{n-1}-\frac{3}{2}u^{n})}u^{n},~~~~ z = \Delta t\,\lambda.
\end{equation}
A discrete analogy of the equilibrium points of the continuous model \eqref{eq:simplified-model} is a fixed point of its discrete version \eqref{eq:simple-scheme}; that is, 
find $u^*$ satisfying
\begin{equation}
u^* = G(u^*,u^*;z).
\end{equation}
Hence, one of the consistent implications of the discrete scheme is that any equilibrium point is also a fixed point. 
Remarkably, this is verified because $u^*=0$ and $u^*=1$ are also fixed points of the discrete problem.

What remains now is to show that, in the discrete sense, these fixed points have local stability properties similar to those of the equilibrium points in the continuous case. 
To do this, we proceed as follows. Because Eq. \eqref{eq:simple-scheme} involves nonlinear terms in $u^{n-1}$ and $u^n$, we take the partial derivatives of the right-hand-side 
of \eqref{eq:rhs-simple-scheme} and evaluate them at the fixed points. This gives us a locally linearized discrete system:
\begin{equation}\label{eq:local-linear-discrete}
u^{n+1}=\dfrac{\partial G~~~}{\partial u^{n-1}}\bigg\vert_{u^{n-1}=u^n=u^*}\,u^{n-1}~~~+~~~\dfrac{\partial G}{\partial u^{n}}\bigg\vert_{u^{n-1}=u^n=u^*}\,u^{n}.
\end{equation}
For $u^*=0$, \eqref{eq:local-linear-discrete} becomes
\begin{equation}\label{eq:first-linearized}
u^{n+1}= \dfrac{2+z}{2-z}u^n,
\end{equation}
whereas for $u^*=1$, it becomes
\begin{equation}\label{eq:second-linearized}
u^{n+1}= \frac{1}{2}zu^{n-1}+ (1-\frac{3}{2}z)u^n.
\end{equation}
Following the standard procedure, we replace $\xi^{k+1}$ for $u^{n+k}$, for $k=-1,0,1$; to obtain the following characteristic polynomials corresponding to 
linearized schemes \eqref{eq:first-linearized} and \eqref{eq:second-linearized}:
\begin{equation}
\Pi_{0}(\xi; z) = \xi^2-\dfrac{2+z}{2-z}\xi,~~\text{and}~~\Pi_{1}(\xi; z) = \xi^2-(1-\frac{3}{2}z)\xi - \frac{1}{2}z.
\end{equation}
The stability requirement is that the roots of the characteristic polynomials should lie inside the unit ball centered at the origin of the complex $\xi$-plane. 
Hence, as shown in Fig.~\ref{fig:stab}, the fixed point $u^*=0$ is unconditionally stable, whereas $u^*=1$ is unconditionally unstable as required.

In addition to the use of semi-linearization of the reaction term, updating each equation successively based on the current time-step solutions of the already 
solved ones at the same time further increases stability.
\begin{figure}[!h]
\begin{center}
\begin{tabular}{cc}
\includegraphics[angle=0,width=6.5cm,height=5.0cm ]{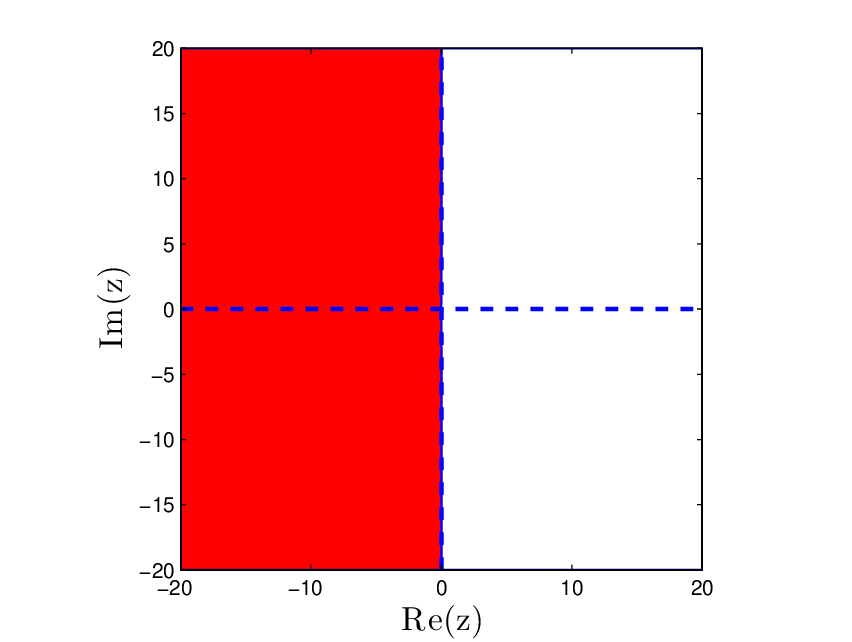}&
\includegraphics[angle=0, width=6.5cm,height=5.0cm]{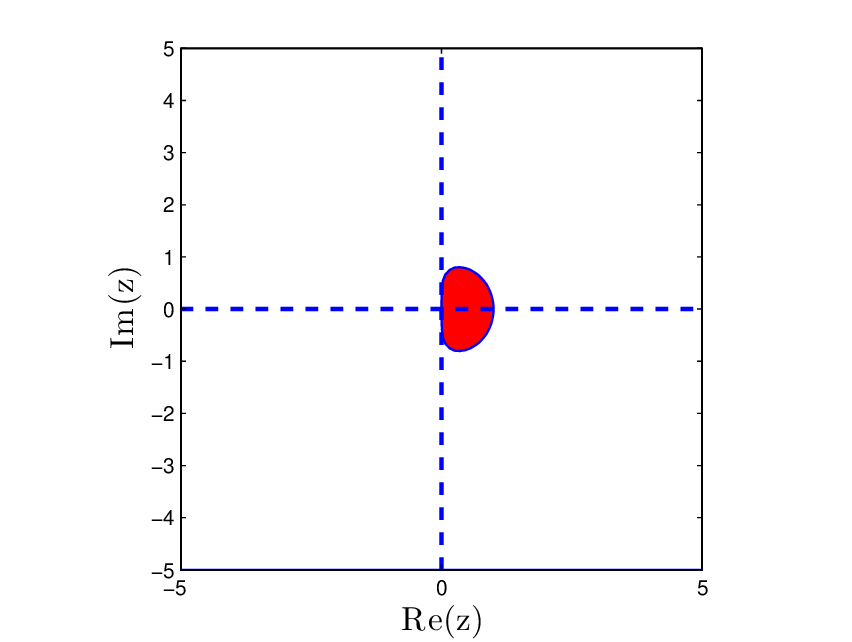}\\
(a) Local stability when $u^*=0$&(b) Local stability when $u^*=1$\\
\end{tabular}
\caption{{\small Stability regions of the semi-linearized scheme applied to the reduced nonlinear problem \eqref{eq:simplified-model}}.}
\label{fig:stab}
\end{center}
\end{figure}

\section{Numerical results}\label{numerical results-PP-6}
In this section, we present various numerical results to demonstrate the performance of our proposed schemes in simulating several cyclic competition patterns. 
The nonlinear complex spatio-temporal structures that were recently explored in \cite{cangiani2017revealing} are presented. These spatio-temporal patterns include 
droplet-like, band-like, glider-like, and regular spiral-like structures, each of which corresponds to a parameter set for which $a = 1,~ b = 2$ according to the 
rescaled system \eqref{rescale} and with different mobility rates. In all simulations, the spatial domain is considered as $\Omega \in [-2,~ 2] \times [-2,~ 2]$ with 
a mesh grid $251\times251$ of Q1 quadrilateral elements, except in the case of strip-like banded patterns, as shown in Fig.~\ref{fig:Band}, for which we 
considered $\Omega \in [-8,~ 8] \times [-8,~ 8]$ with a mesh grid $291\times291$ of Q1 quadrilateral elements. The time-step length used for all the simulations was $\Delta t = 1$. 
The initial condition is a simple segregation configuration with a single triple-junction at the top-right quarter of the domain, with a separation angle for each species of $2\pi/3$. 
The following color code was used throughout the simulations: green for $u_1$, blue for $u_2$, and red for $u_3$.

 We first considered the classical case of cyclic competition patterns, as shown in Fig.~\ref{fig:Droplet},~\ref{fig:Band}, and~\ref{fig:Spiral}. The shapes of the patterns differed depending 
 on the mobility rates of the species. As shown in Fig.~\ref{fig:Droplet}, where $\varepsilon_2 = 0.1$ and $\varepsilon_3 = 0.6$ were used, an expanding triangular droplet-like structures 
 with complex patterns were formed. A highly structured coexistence pattern of the three species emerged at the bottom left of the sharp wedge. The other type of coexistence pattern, 
 as shown in Fig.~\ref{fig:Band}, where $\varepsilon_2 = 0.1$ and $\varepsilon_3 = 0.9$, is a strip-like structure. It starts in the same way as Fig.~\ref{fig:Droplet} with the inside appearing 
 irregular coexistence pattern, but as time goes on, the shape diverges from the droplet-like pattern with the inside becoming more regular, which involves band-like structures at the bottom left 
 of the envelope. The number of bands increased and the envelope expanded as time increased. 
 
 Fig.~\ref{fig:Spiral} shows the simple spiral-like dynamical coexistence pattern with spiral centering at the initial triple junction. 
 In this case, the diffusion coefficients are equal, that is, $\varepsilon_2 = 1$ and $\varepsilon_3 = 1$. Finally, a conditional cyclic competition of the three species is displayed in Fig.~\ref{fig:Shooting} 
 for the parameter $\alpha = 1.3$ and diffusion coefficients $\varepsilon_2 = 0.55$ and $\varepsilon_3 = 0.5$. As shown in this figure, initially, it forms a spiral tip, from which the droplet shape detached later. 
 The dynamics moved to the left corner of the domain in which some of the structure persisted for a longer time and some disappear faster. 
 Finally, only one species survived longer as shown in the last sub-plot of Fig. \ref{fig:Shooting}.

\begin{figure}[!h]
\begin{center}
\begin{tabular}{cc}
\vspace*{1em}
\includegraphics[angle=0, width=6.0cm, trim={2cm, 0.5cm, 2cm, 0}, clip, height=6.0cm]{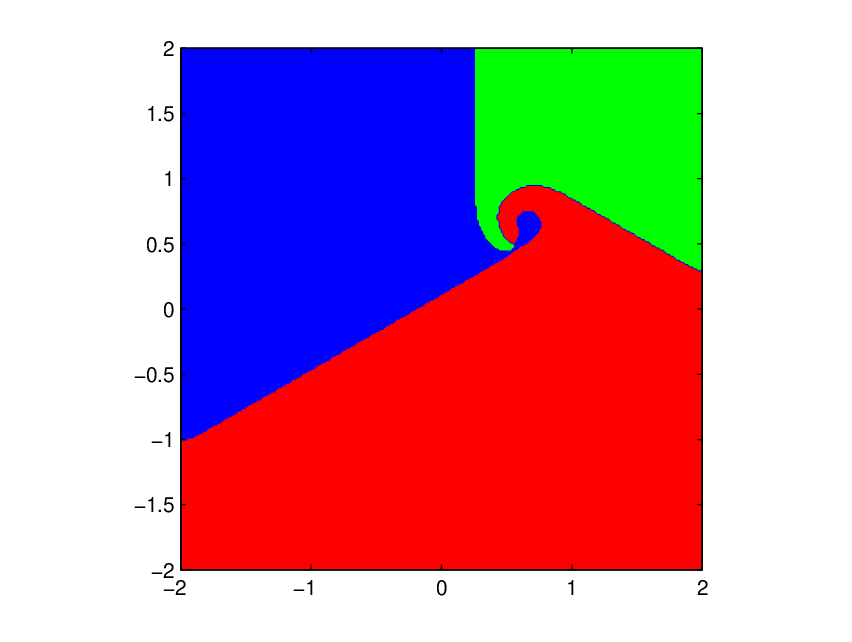}&
\includegraphics[angle=0, width=6.0cm, trim={2cm, 0.5cm, 2cm, 0}, clip, height=6.0cm]{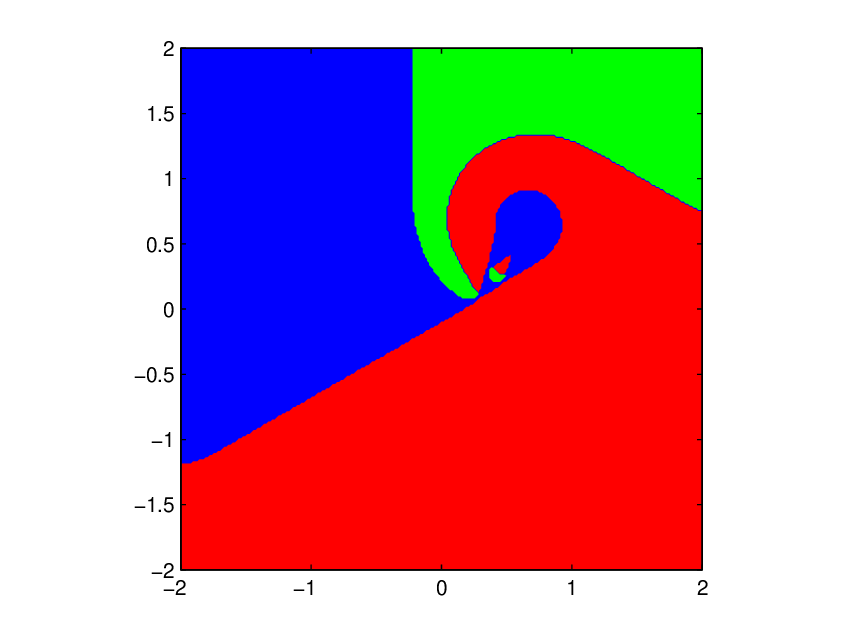}\\
$t = 80$ & $t = 150$\\
\includegraphics[angle=0, width=6.0cm, trim={2cm, 0.5cm, 2cm, 0}, clip, height=6.0cm]{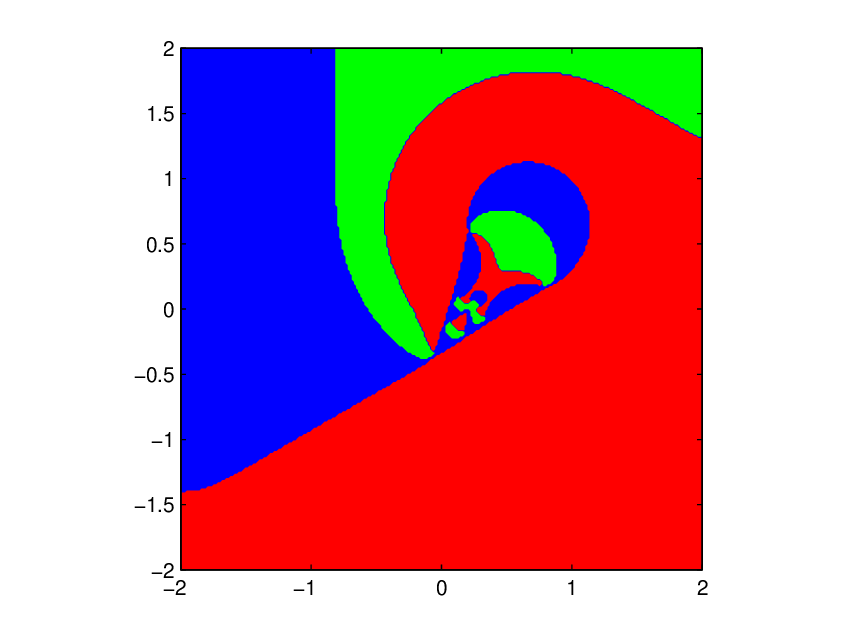}&
\includegraphics[angle=0, width=6.0cm, trim={2cm, 0.5cm, 2cm, 0}, clip, height=6.0cm]{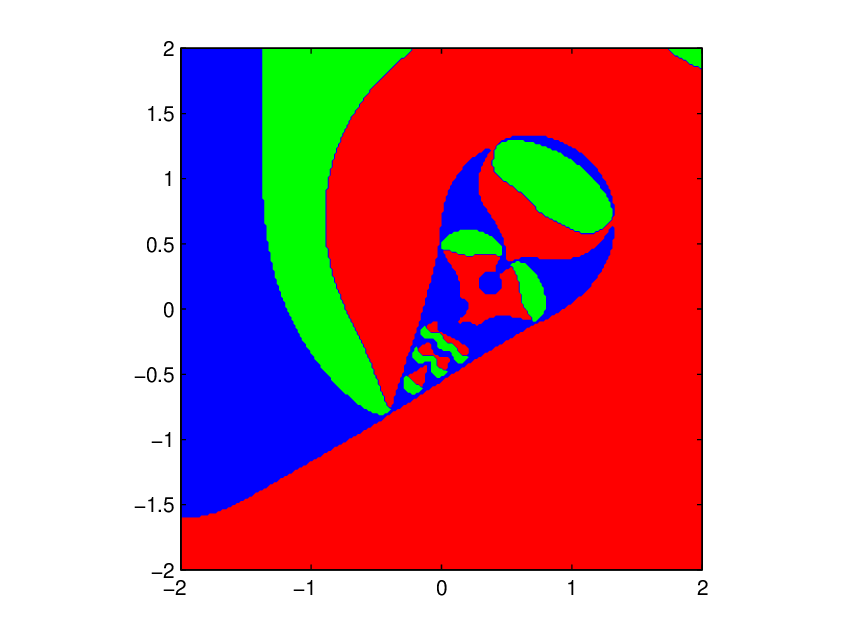}\\
$t =250$ & $t = 350$\\
\includegraphics[angle=0, width=6.0cm, trim={2cm, 0.5cm, 2cm, 0}, clip, height=6.0cm]{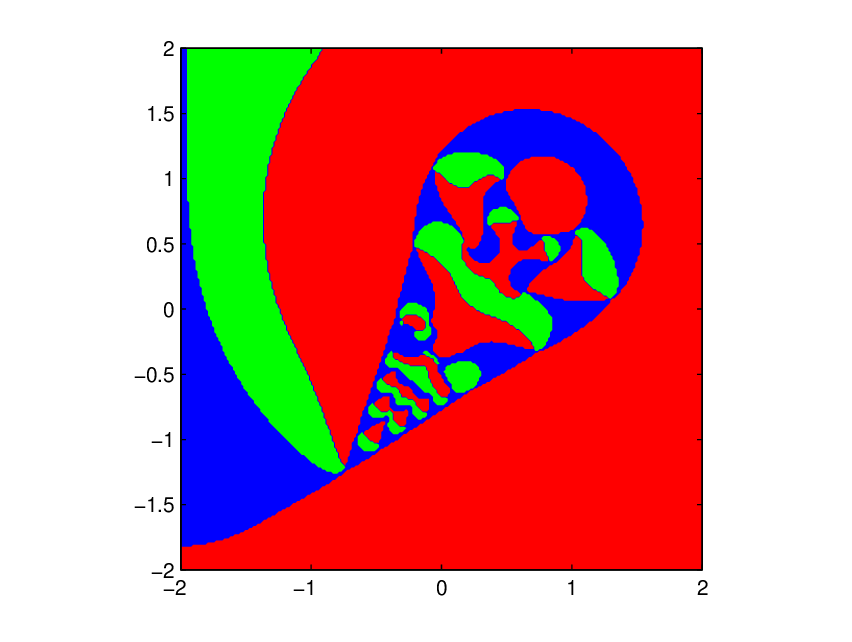}&
\includegraphics[angle=0, width=6.0cm, trim={2cm, 0.5cm, 2cm, 0}, clip, height=6.0cm]{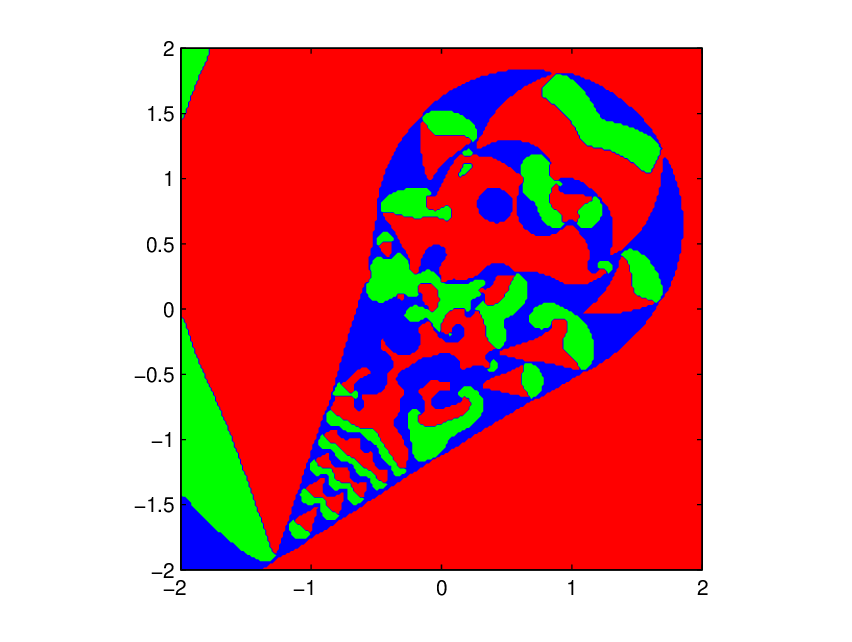}\\
$t =450$ & $t = 600$
\end{tabular}
\caption{{\small Droplet-like pattern in the dynamics of the three species at different times. Parameters used for the simulation are $a = 1 , ~ b = 2,~ \varepsilon_2 = 0.1,~ \varepsilon_3 = 0.6$, and $\Delta t=1$.}}
\label{fig:Droplet}
\end{center}
\end{figure}
\begin{figure}[!h]
\begin{center}
\begin{tabular}{cc}
\includegraphics[angle=0, width=6.0cm, trim={2cm, 0.5cm, 2cm, 0}, clip, height=6.0cm]{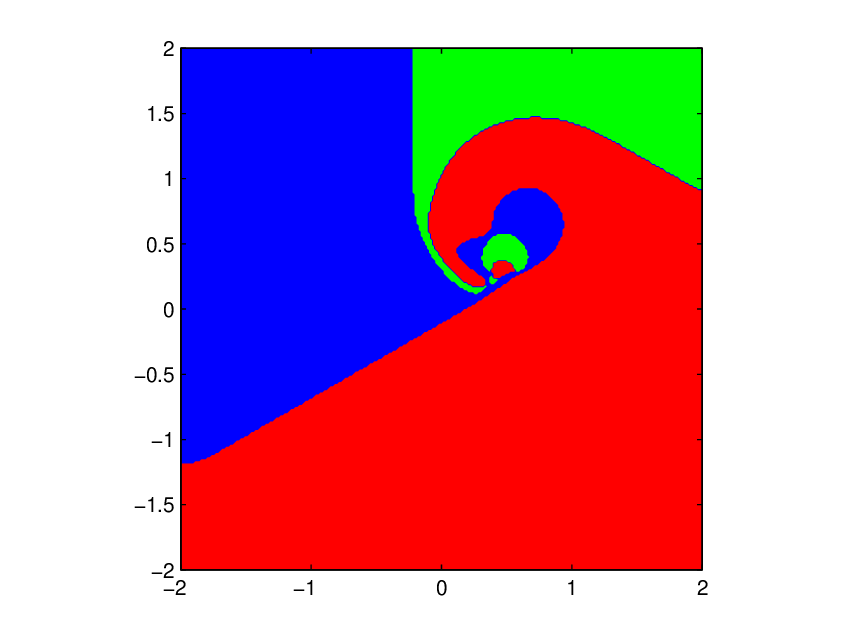}&
\includegraphics[angle=0, width=6.0cm, trim={2cm, 0.5cm, 2cm, 0}, clip, height=6.0cm]{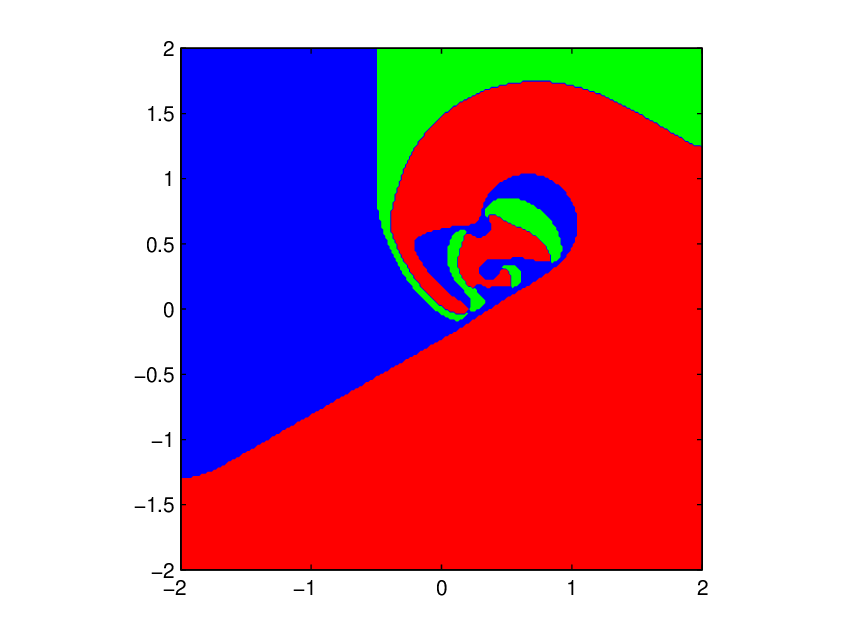}\\
$t = 150$ & $t = 200$\\
\includegraphics[angle=0, width=6.0cm, trim={2cm, 0.5cm, 2cm, 0}, clip, height=6.0cm]{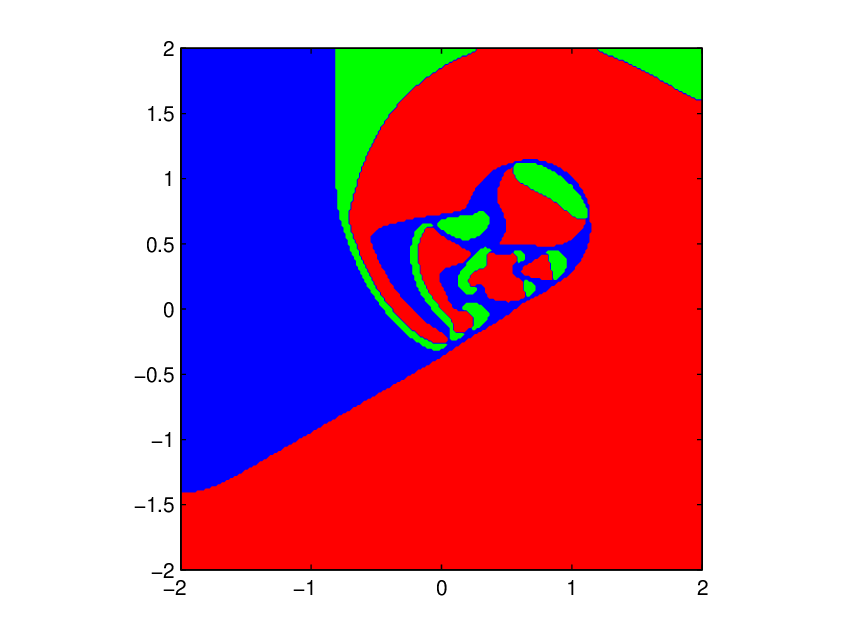}&
\includegraphics[angle=0, width=6.0cm, trim={2cm, 0.5cm, 2cm, 0}, clip, height=6.0cm]{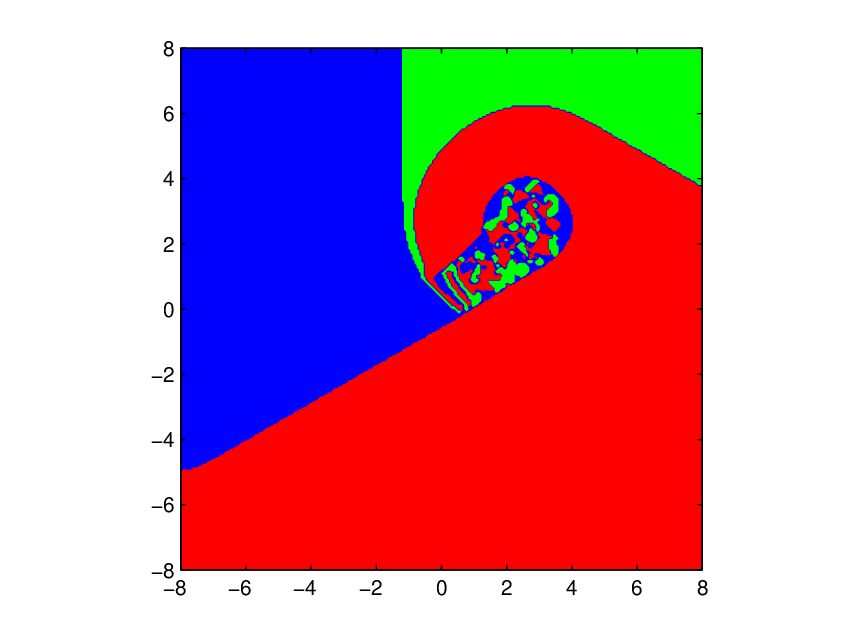}\\
$t = 250$ & $t = 300$\\
\includegraphics[angle=0, width=6.0cm, trim={2cm, 0.5cm, 2cm, 0}, clip, height=6.0cm]{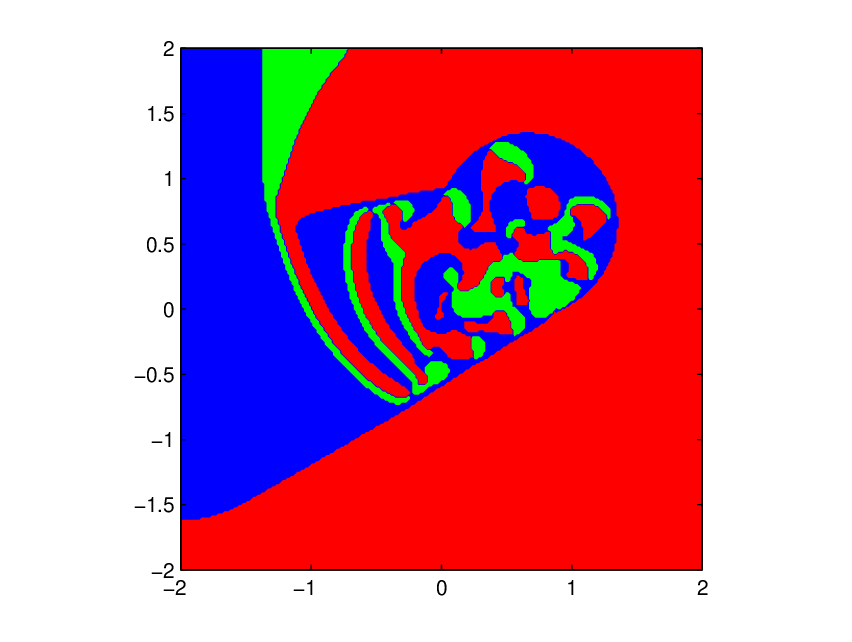}&
\includegraphics[angle=0, width=6.0cm, trim={2cm, 0.5cm, 2cm, 0}, clip, height=6.0cm]{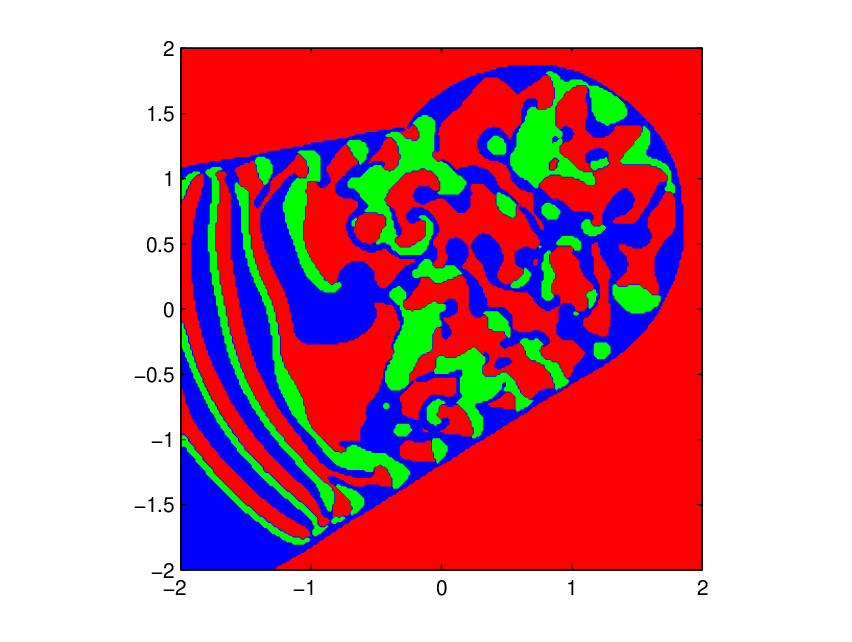}\\
$t = 350$ & $t = 600$\\
\end{tabular}
\caption{{\small Strip-like pattern in the dynamics of the three species at different times. Parameters used for the simulation are $a = 1 , ~ b = 2,~ \varepsilon_2 = 0.1,~ \varepsilon_3 = 0.9$, and $\Delta t= 1$.}}
\label{fig:Band}
\end{center}
\end{figure}
\begin{figure}[!h]
\begin{center}
\begin{tabular}{cc}

\includegraphics[angle=0, width=6.0cm, trim={2cm, 0.5cm, 2cm, 0}, clip, height=6.0cm]{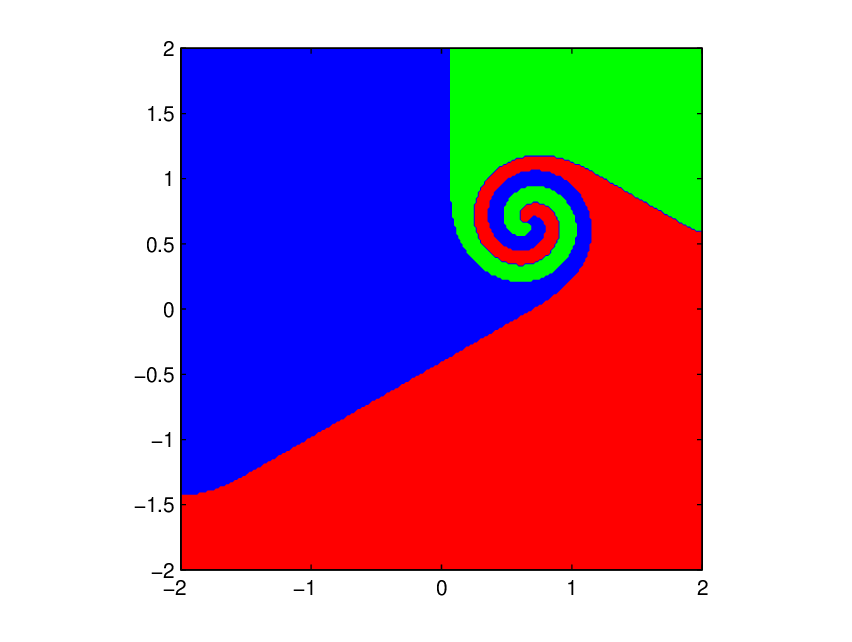}&
\includegraphics[angle=0, width=6.0cm, trim={2cm, 0.5cm, 2cm, 0}, clip, height=6.0cm]{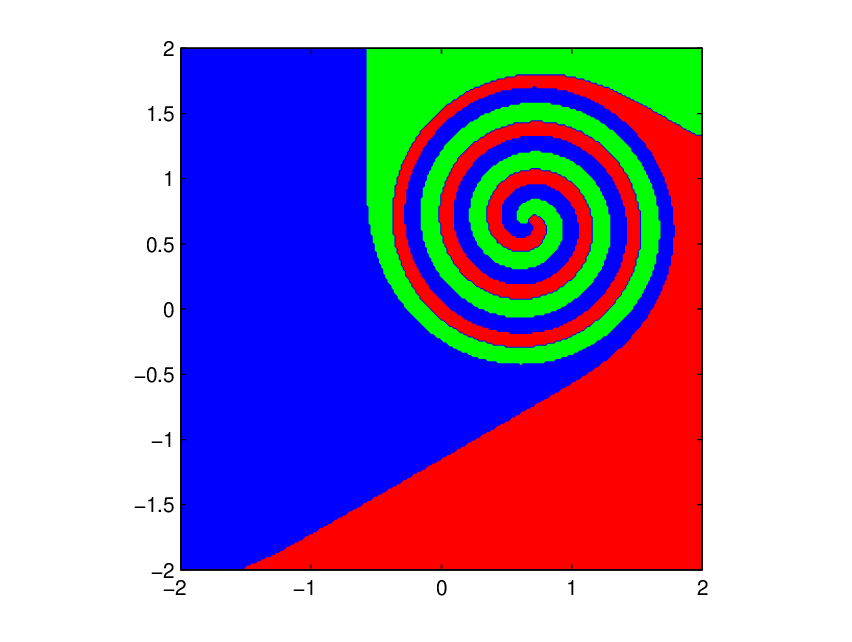}\\
$t = 0$ & $t = 10$\\
\includegraphics[angle=0, width=6.0cm, trim={2cm, 0.5cm, 2cm, 0}, clip, height=6.0cm]{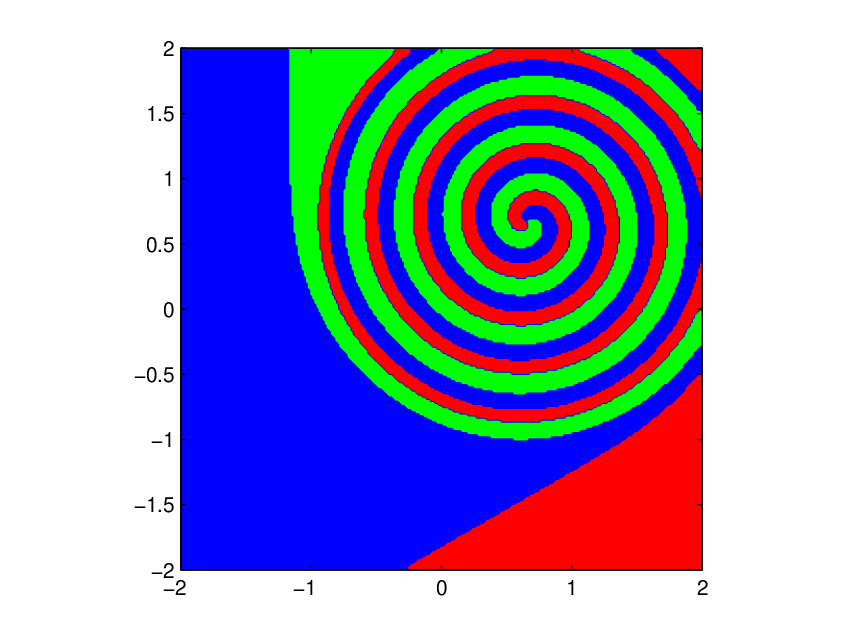}&
\includegraphics[angle=0, width=6.0cm, trim={2cm, 0.5cm, 2cm, 0}, clip, height=6.0cm]{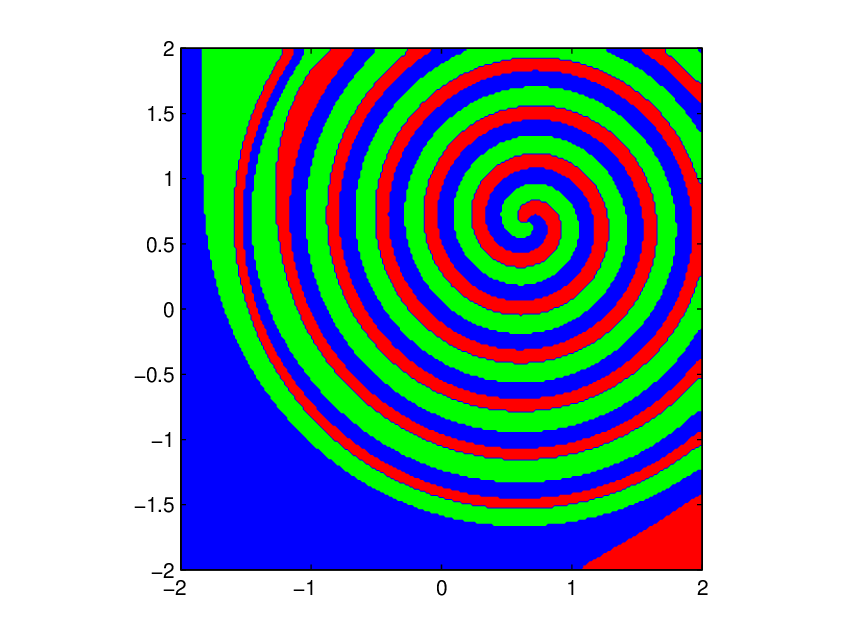}\\
$t = 25$ & $t = 38$\\
\includegraphics[angle=0, width=6.0cm, trim={2cm, 0.5cm, 2cm, 0}, clip, height=6.0cm]{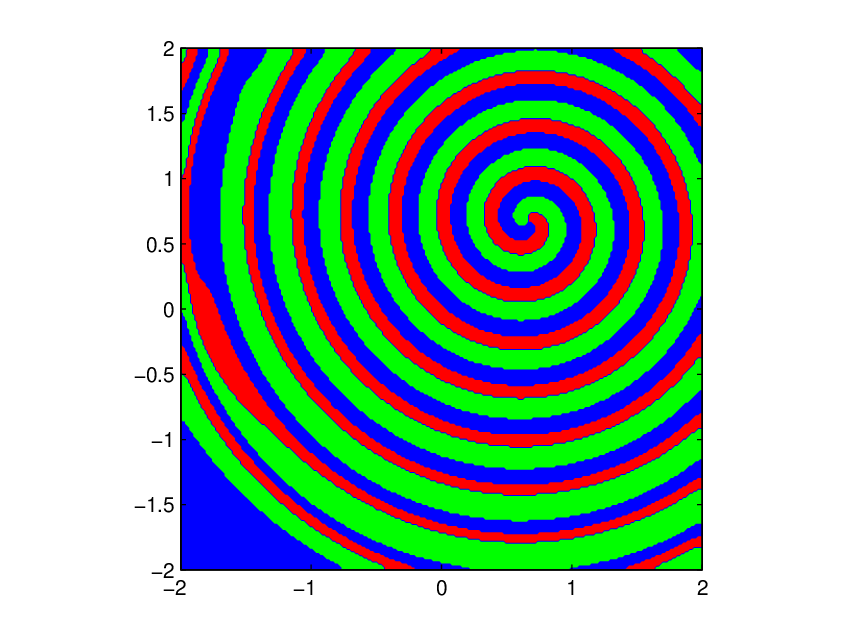}&
\includegraphics[angle=0, width=6.0cm, trim={2cm, 0.5cm, 2cm, 0}, clip, height=6.0cm]{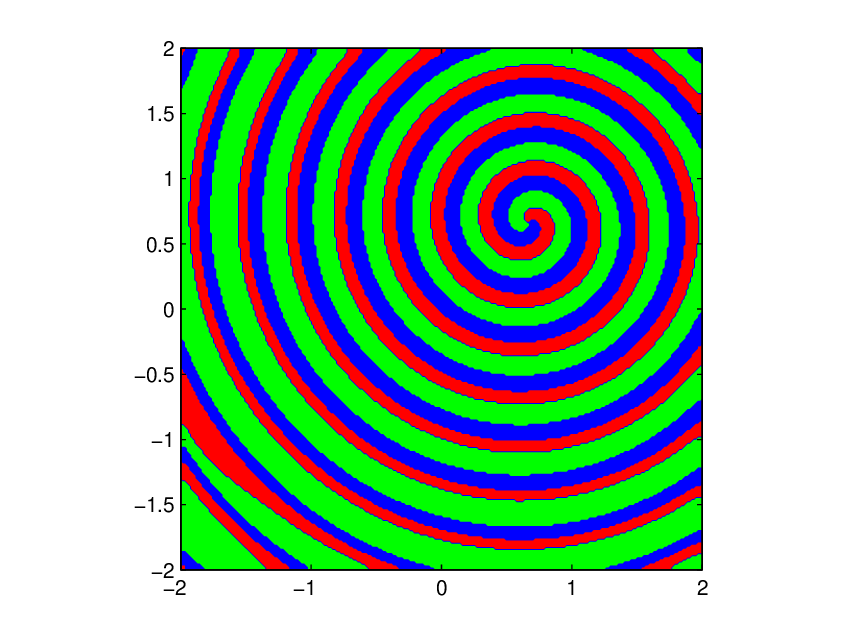}\\
$t = 50$ & $t = 63$\\
\end{tabular}
\caption{{\small Spiral-like pattern in the dynamics of the three species at different times. Parameters used for the simulation are $a = 1 , ~ b = 2,~ \varepsilon_2 = 1.0,~ \varepsilon_3 = 1.0$, and $\Delta t= 1$.}}
\label{fig:Spiral}
\end{center}
\end{figure}

\begin{figure}[!h]
\begin{center}
\begin{tabular}{ccc}
\includegraphics[angle=0, width=6.0cm, trim={2cm, 0.5cm, 2cm, 0}, clip, height=6.0cm]{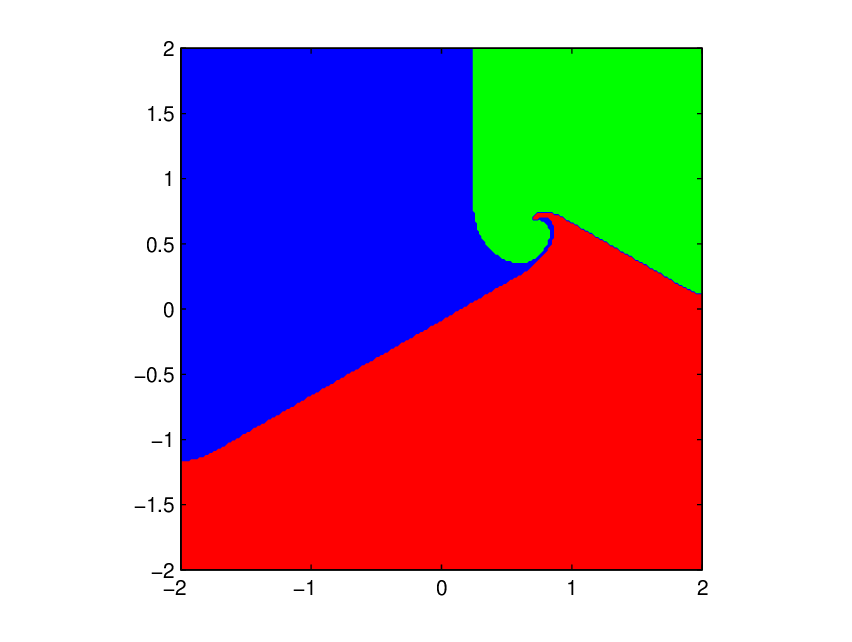}&
\includegraphics[angle=0, width=6.0cm, trim={2cm, 0.5cm, 2cm, 0}, clip, height=6.0cm]{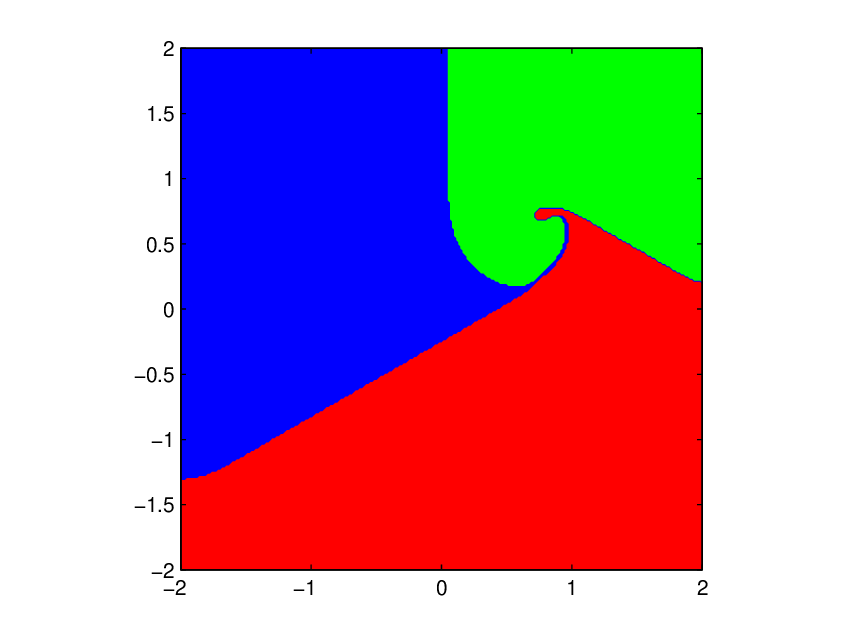}\\
$t = 70$ & $t = 100$\\
\includegraphics[angle=0, width=6.0cm, trim={2cm, 0.5cm, 2cm, 0}, clip, height=6.0cm]{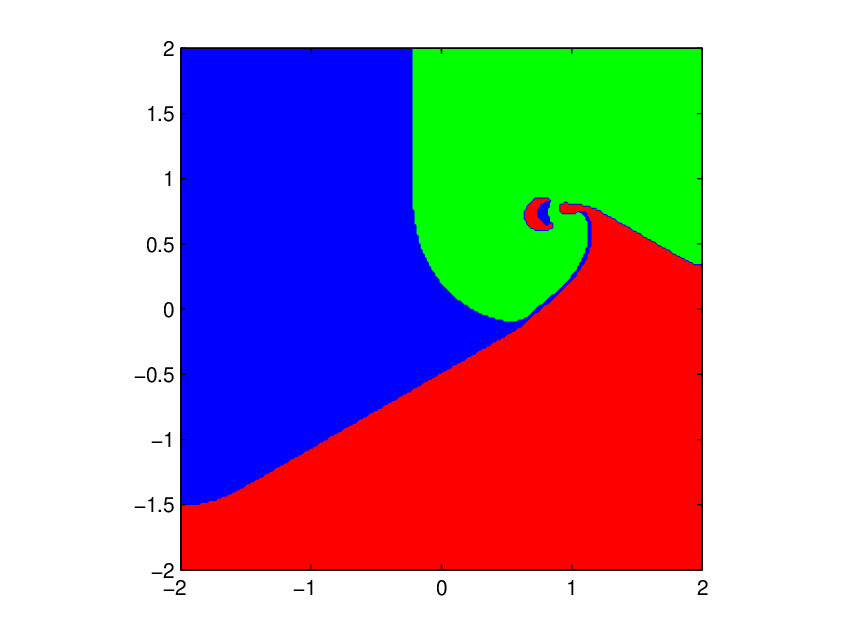}&
\includegraphics[angle=0, width=6.0cm, trim={2cm, 0.5cm, 2cm, 0}, clip, height=6.0cm]{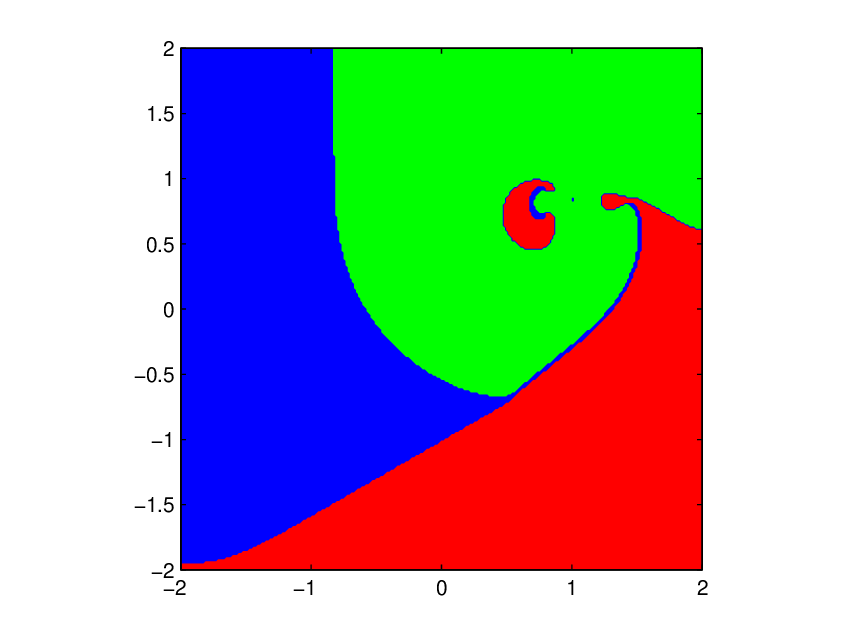}\\
$t = 150$ & $t = 250$\\
\includegraphics[angle=0, width=6.0cm, trim={2cm, 0.5cm, 2cm, 0}, clip, height=6.0cm]{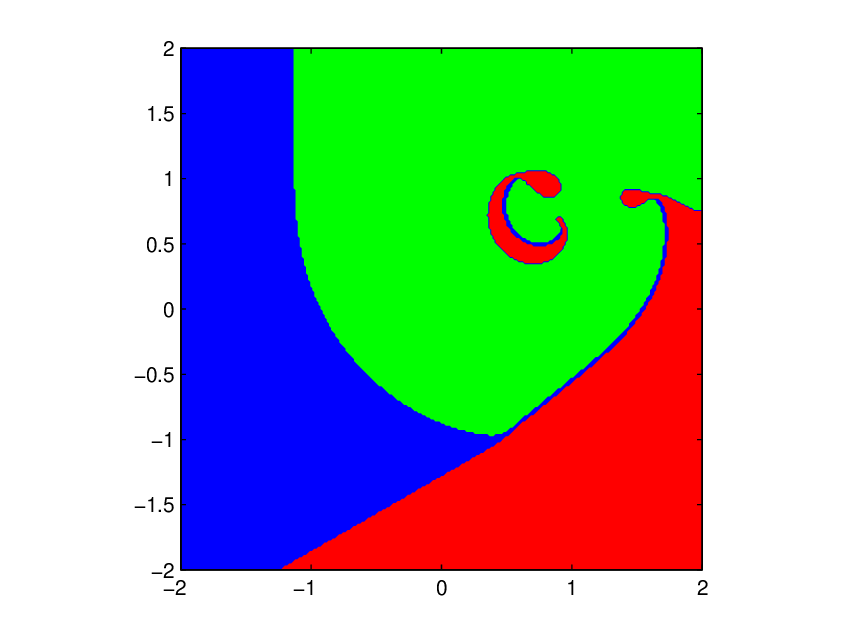}&
\includegraphics[angle=0, width=6.0cm, trim={2cm, 0.5cm, 2cm, 0}, clip, height=6.0cm]{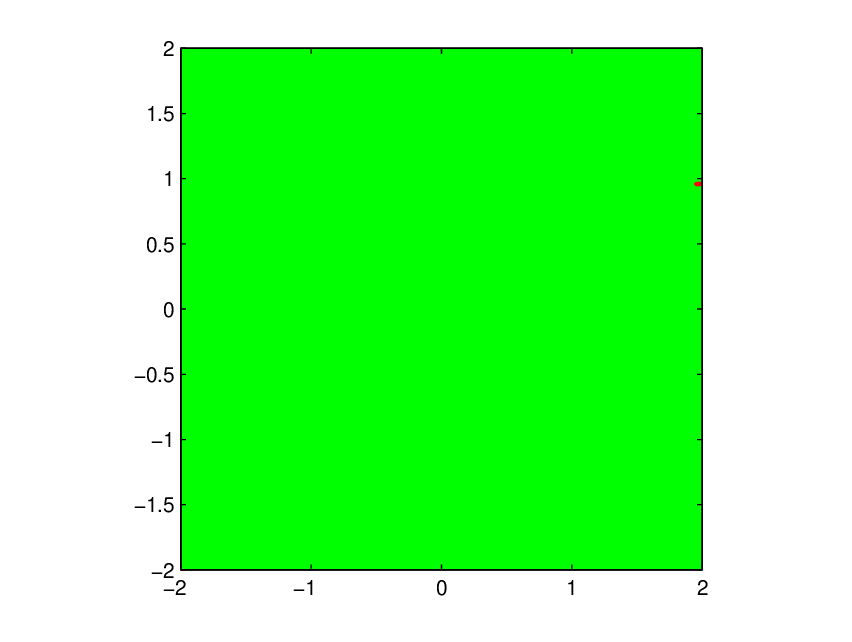}\\
$t = 300$ & $t = 400$\\
\end{tabular}
\caption{{\small Glider-like patterns of the three species at different times. Parameters used for the simulation are $a = 1 ,~ b = 2,~ \varepsilon_2 =0.55 ,~ \varepsilon_3 =0.5,~\alpha = 1.3 $, and $\Delta t = 1$.}}
\label{fig:Shooting}
\end{center}
\end{figure}

\section{Concluding remarks}\label{sec:conclusion}\label{Concluding remarks-PP-6}
In this study, we developed high-order semi-implicit multistep schemes based on the Crank-Nicolson and Adams-Bashforth methods for 
temporal discretization in conjunction with $C^0$-conforming finite element method for the nonlinear singularly perturbed three-species 
competition-diffusion model in a two-dimensional spatial domain. The semi-implicit scheme is second-order accurate in time, and has very good stability. 
Moreover, the proposed scheme has better stability than IMEX-based methods for singularly competitive-diffusion problems, 
in which diffusion is significantly less dominant than the reaction term. Several types of two-dimensional spatio-temporal patterns arising from the fact that 
the species have different mobilities, are simulated to demonstrate the performance of the proposed scheme.
\\
\\

\section{Declarations}

{\bf Funding.} KP was supported by the University of Missouri South African Education Program (UMSAEP) and South African National Research Foundation. \\

{\bf Conflicts of interest.} The authors declare that they have no conflicts of interest. \\

{\bf Availability of data and materials.} The datasets generated and analyzed during the current study are available from the corresponding author upon reasonable request. \\


{\bf Authors' contributions.} XL developed the finite element scheme, performed the numerical analysis, and wrote the manuscript. KP contributed to the design of the project, development of the multi-step numerical scheme, performed the stability analysis, and wrote the manuscript. AM performed the simulations, analyzed the numerical results, and wrote the manuscript. All authors read and approved the final manuscript.


\end{document}